\theoremstyle{plain}
\newtheorem{lemma}{Lemma}[section]
\newtheorem{theorem}[lemma]{Theorem}
\newtheorem{proposition}[lemma]{Proposition}
\theoremstyle{remark}
\newtheorem{definition}[lemma]{Definition}
\newcommand{\A}{\mathbb{A}}
\newcommand{\N}{\mathbb{N}}
\newcommand{\Sym}{\mathfrak{S}}
\newcommand{\Z}{\mathbb{Z}}
\DeclareMathOperator{\Tr}{Tr}
\author{Maciej Do\l\k{e}ga}
\address{Institute of Mathematics,
University of Wroclaw,  \mbox{pl.\ Grunwaldzki~2/4,} 50-384
Wroclaw, Poland}
\email{Maciej.Dolega@math.uni.wroc.pl}
\author{Piotr \'Sniady}
\address{Institute of Mathematics, Polish Academy of Sciences,
\mbox{ul.~Śniadeckich 8}, 00-956 Warszawa, Poland \newline
\indent Institute of Mathematics,
University of Wroclaw,  \mbox{pl.\ Grunwaldzki~2/4,} 50-384
Wroclaw, Poland}
\email{Piotr.Sniady@math.uni.wroc.pl}
\title[Structure of Kerov character polynomials]{Asymptotics of characters of
symmetric groups: structure of Kerov character polynomials}
\begin{document}

\begin{abstract}
We study asymptotics of characters of the symmetric groups on a fixed conjugacy class.
It was proved by Kerov that such a character can be expressed as a polynomial in free cumulants
of the Young diagram (certain functionals describing the shape of the Young
diagram).
We show that for each genus there exists a universal symmetric
polynomial which gives the coefficients of the part of Kerov character
polynomials with the prescribed homogeneous degree. The existence of such
symmetric polynomials was conjectured by Lassalle.
\end{abstract}

\maketitle


\section{Introduction}
\subsection{Asymptotic representation theory of symmetric groups}
  
\emph{What can we say about the representations of the symmetric groups
$\Sym(n)$ in the limit $n\to\infty$?} This very general question is the subject
of investigations of \emph{the asymptotic representation theory of the symmetric
groups}. Even though for almost any question of the representation theory of the
symmetric groups the answer is known, usually this answer is given by a
combinatorial algorithm (for example, Murnaghan-Nakayama rule or
Littlewood-Richardson rule) involving manipulations with boxes of a Young
diagram. As $n$, the number of boxes, tends to infinity, such combinatorial
algorithms become very cumbersome and it is not easy to extract from them some
reasonable asymptotic answer. For this reason one has to look for new,
alternative approaches, which would less depend on the details of boxes of a
given Young diagram, but rather on its `global' features.

\subsection{Asymptotic shape of Young diagrams}
In this article we study the scaling of \emph{balanced Young diagrams} which
means that a Young diagram with $n$ boxes is assumed to have at most
$O(\sqrt{n})$ rows and columns. This scaling almost inevitably leads to the
concept of \emph{(asymptotic) shape} of a Young diagram: roughly speaking, we
disregard the information about the number of boxes of a Young diagram and we
are interested only how the Young diagram looks in large-scale perspective. More
precisely, this concept of (asymptotic) shape of a Young diagram corresponds to
the \emph{dilated Young diagram} $\frac{1}{\sqrt{n}} \lambda$ which, roughly
speaking, is obtained by replacing each unit box of a Young diagram by a box of
dimensions $\frac{1}{\sqrt{n}} \times \frac{1}{\sqrt{n}}$. Such a dilated Young
diagram is usually no longer a Young diagram but is a \emph{generalized Young
diagram} and we should not regard it as a combinatorial object but rather as a
geometric one. Since the area of the dilated Young diagram $\frac{1}{\sqrt{n}}
\lambda$ is always equal to $1$ (where $n$ denotes the number of boxes of
$\lambda$), this setup is very convenient for comparing shapes of Young diagrams
with different number of boxes. In this way we get a unified framework which
allows us to consider and compare Young diagrams with various number of boxes,
all at the same time. 

Probably the most celebrated result related to this scaling of Young diagrams is
the one of Logan and Shepp \cite{LoganShepp1977} and Ver{\v{s}}ik and Kerov
\cite{VervsikKerov1977} who proved that a random Young diagram (distributed
according to the Plancherel measure) will typically be very close to some
explicit asymptotic shape.

In order to keep this paper as simple as possible and to avoid generalized
Young diagrams, in the following we will consider only dilations of Young
diagrams by factors which are positive integers. This operation can be
easily described on a graphical representation of a Young diagram: we just
dilate the picture of $\lambda$ or, alternatively, we replace each box of
$\lambda$ by a grid of $s\times s$ boxes, see Figure \ref{fig:dilation}.
Note that if we fix a Young diagram $\lambda$ then the sequence of dilated Young
diagrams $(s\lambda)_{s=1,2,\dots}$ is an example of a collection of balanced
Young diagrams. It follows that our rather vague plan of studying balanced
Young diagrams can be made more concrete by studying the sequence of Young
diagrams $(s\lambda)_{s=1,2,\dots}$ in the limit as $s\to\infty$.

\begin{figure}[tb]
         \begin{tikzpicture}[scale=0.5]
            \begin{scope} 
                  \clip (0,2) 
                  -- (1,2) 
                  -- (1,1) 
                  -- (3,1) 
                  -- (3,0)
                  -- (0,0); 
                  \draw grid (4.3,3.4);
            \end{scope} 
            \draw (1.5,-0.5) node[anchor=north] {$\lambda$}; 
            \draw[ultra thick] (0,2) 
            -- (1,2) 
            -- (1,1) 
            -- (3,1) 
            -- (3,0)
            -- (0,0) 
            -- cycle ;
        \end{tikzpicture}
\hspace{10ex}
         \begin{tikzpicture}[scale=0.5]
            \begin{scope} 
                  \clip (0,6) 
                  -- (3,6) 
                  -- (3,3) 
                  -- (9,3) 
                  -- (9,0)
                  -- (0,0); 
                  \draw grid (12.3,9.4);
            \end{scope} 
            \draw (4.5,-0.5) node[anchor=north] {$3\lambda$}; 
            \draw[ultra thick] (0,6) 
            -- (3,6) 
            -- (3,3) 
            -- (9,3) 
            -- (9,0)
            -- (0,0) 
            -- cycle ;
        \end{tikzpicture}
\caption{Young diagram $\lambda=(3,1)$ drawn in the french convention and its
dilation $3\lambda$.}
\label{fig:dilation}
\end{figure}
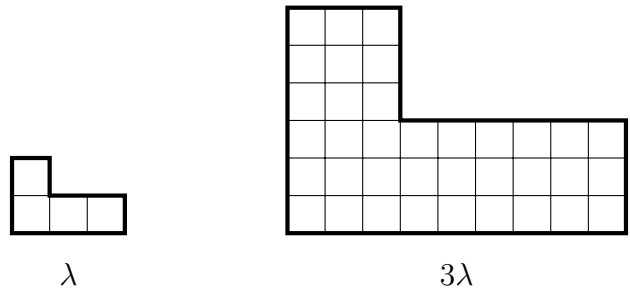

\subsection{How to normalize the characters?}
\emph{How should we normalize the characters of the symmetric groups in order to
obtain some meaningful asymptotic quantities?} 
The answer for this question was given by \cite{Biane2003} who
gave the following definition.
For any permutation $\pi\in \Sym(k)$ and an irreducible representation
$\rho^\lambda$ of the
symmetric group $\Sym(n)$ corresponding to the Young diagram $\lambda$ we
define the \emph{normalized character}
\begin{equation}
\label{eq:character}
 \Sigma^\lambda_\pi =\begin{cases} \underbrace{n(n-1)\cdots (n-k+1)}_{k \text{
factors}} \frac{\Tr \rho^\lambda(\pi)}{\text{dimension of $\rho^\lambda$}} &
\text{if } k\leq n, \\ 0 & \text{otherwise.} \end{cases} 
\end{equation}
An interesting feature of this definition is that we do not require that $k$,
the index of the symmetric group to which $\pi$ belongs, must be equal to $n$,
the number of boxes of $\lambda$. In order for $\rho^\lambda(\pi)$ to make
sense, for $k<n$ we just declare that the permutation $\pi\in\Sym(k)$ can be
also regarded as an element of $\Sym(n)$; we just add to $\pi$ additional $n-k$
fixpoints. As it was pointed out by Scarabotti \cite{Scarabotti2011}, it would
be more appropriate to use the name of \emph{spherical function} instead of
\emph{character} for these objects, nevertheless we will stick to this old
nomenclature.

Particularly interesting are the values of characters on cycles, therefore we
will use the notation
$$ \Sigma^\lambda_k = \Sigma^\lambda_{(1,2,\dots,k)}. $$

In this article we will study the following problem: \emph{for fixed value of
$k$, what
can we say about the normalized characters $\Sigma^{\lambda}_k$ when a balanced
Young diagram $\lambda$ tends to infinity} or, in a slightly more concrete
reformulation, \emph{what can we say about the normalized characters $\Sigma^{s
\lambda}_k$ related to dilated Young diagrams $s\lambda$ in the limit as
$s\to\infty$?}

\subsection{How to describe the shape of a Young diagram?}
A natural question arises: \emph{how to choose parameters which describe the
shape of a Young diagram in the most convenient way?} A very interesting answer
for this question was proposed by Biane \cite{Biane1998} who for a (generalized)
Young diagram $\lambda$ defined a family of parameters
$R_2^{\lambda},R_3^{\lambda},\dots$, called \emph{free cumulants} of $\lambda$.

The original definition of free cumulants of $\lambda$ given by Biane is quite
involved (free cumulants of a Young diagram are Speicher's \emph{free cumulants}
\cite{NicaSpeicher2006} --- related to Voiculescu's \emph{free probability
theory} \cite{VoiculescuDykemaNica1992} --- of Kerov's \emph{transition
measure} \cite{Kerov1993} of the Young diagram $\lambda$), but it has an
advantage that it is very explicit and allows an algorithmic computation of
free cumulants in terms of the \emph{shape} of a Young diagram.

From the results of Biane \cite{Biane1998} one can show the very surprising fact
that for any integer $k\geq 1$ and any Young diagram $\lambda$ the values of the
normalized character on dilations of $\lambda$
$$ \mathbb{N} \ni s \mapsto \Sigma_k^{s\lambda} $$
are given by a polynomial function of degree (at most) $k+1$. Furthermore, the
leading coefficient is equal to one of the free cumulants:
\begin{equation} 
\label{eq:biane}
R_{k+1}^{\lambda}=[s^{k+1}] \Sigma_k^{s\lambda} =
\lim_{s\to\infty} \frac{\Sigma^{s\lambda}_{k}}{s^{k+1}}.
\end{equation}
From the perspective of the Biane's work \cite{Biane1998} this is a highly
nontrivial and very interesting result: it shows that free cumulants (which are
viewed as concrete, algorithmically computable quantities) describe the
first-order asymptotics of characters.
For the purposes of this article we can reverse the optics and take
\eqref{eq:biane} as a convenient (even if somewhat abstract) definition of free
cumulants.

\subsection{Free cumulants}
We define the \emph{free
cumulants} $R_2^\lambda,R_3^\lambda,\dots$ as
$$R_{k}^\lambda = \lim_{s\to\infty} \frac{1}{s^{k}}
\Sigma^{s\lambda}_{k-1}, $$
in other words each free cumulant is asymptotically the dominant term of the
character on a cycle of appropriate length in the limit when the Young diagram
tends
to infinity.

One of the reasons why free cumulants are so useful in asymptotic
representation theory is that they are homogeneous with respect to dilations of
the Young diagrams, namely
$$ R^{s\lambda}_k = s^k R^{\lambda}_k.$$


\subsection{Kerov character polynomials}
It turns out that free cumulants can be used not only to provide asymptotic
approximations for the characters of symmetric groups, but also for exact
formulas. Kerov during a talk in Institut Henri Poincar\'e in January 2000
\cite{Kerov2000talk} announced the following result (the first published proof
was given by Biane \cite{Biane2003}): for each permutation $\pi$ there exists a
unique universal polynomial $K_{\pi}$ with integer coefficients, called
\emph{Kerov character polynomial}, with a property that
$$\Sigma^{\lambda}_\pi =
K_{\pi}(R_2^{\lambda},R_3^\lambda,\dots)$$
holds true for any
Young diagram $\lambda$.
We say that the Kerov polynomial is universal because it does not depend on the
choice of $\lambda$. In order to simplify notation we suppress the
$\lambda$-dependence of characters and free cumulants, writing
$$ \Sigma_\pi = K_{\pi}(R_2,R_3,\dots).$$

As usual, we are mostly concerned with the values of the characters on cycles,
therefore we introduce special notation for such Kerov polynomials
$$ \Sigma_k = K_{k}(R_2,R_3,\dots).$$
The first few Kerov polynomials $K_k$ are as follows \cite{Biane2001}:
\begin{align*}
\Sigma_1 &= R_2, \\
\Sigma_2 &= R_3, \\
\Sigma_3 &= R_4 + R_2,   \\
\Sigma_4 &= R_5 + 5R_3,    \\
\Sigma_5 &= R_6 + 15R_4 + 5R_2^2 + 8R_2, \\
\Sigma_6 &= R_7 + 35R_5 + 35R_3 R_2 + 84R_3.
\end{align*}

The primary motivation for investigation of this subject is the asymptotic
representation theory, namely a good understanding of Kerov character
polynomials might in the future shed some light on asymptotics of characters,
also in the most difficult scaling when the length of the permutation on which
we evaluate the character grows with the number of boxes of the Young diagram,
see \cite{F'eray'Sniady2011}. 

The second motivation --- which is key for the purposes of this article --- is
related to \emph{algebraic combinatorics}. The last decade has seen a number of
research papers which stated (sometimes conjecturally) several very surprising
combinatorial properties of Kerov polynomials. For example, it was conjectured
by Kerov \cite{Kerov2000talk} that the coefficients of Kerov polynomials are
non-negative integers. These papers showed not only the richness of the
combinatorial and the analytic structures of the Kerov character polynomials
but also the difficulty in fully understanding these polynomials. Since the
results proved in most of these papers will be necessary for the purposes of
this article we decided to postpone the presentation of these papers until they
are needed. A more complete presentation of the history of the subject and
bibliography can be found in the paper \cite{DolegaF'eray'Sniady2008}.

\subsection{Genus expansion}
It is convenient to consider a gradation with respect
to which the degree of the free cumulant $R_k$ is equal to $k$. We denote by
$K_{k,d}$ the homogeneous part of degree $d$ of the Kerov character
polynomial $K_k$. One of the results announced by Kerov \cite{Kerov2000talk}
was that the only non-zero polynomials $K_{k,d}$ are of the form
$K_{k,k+1-2g}$ where $g\geq 0$ is an integer. It is possible to give some
topological meaning to many calculations related to Kerov polynomials in which
the integer $g$ can be interpreted as the genus of the resulting two-dimensional
surface. For this reason, studying the polynomials $K_{k,k+1-2g}$ for a fixed
value of $g$ is often called the \emph{genus expansion}.

The form of the highest-degree term
$$ K_{k,k+1} = R_{k+1} $$
was announced by Kerov \cite{Kerov2000talk} and proved by Biane
\cite{Biane2003}. The form of the next term $K_{k,k-1}$ was conjectured by
Biane \cite{Biane2003} and proved by Śniady \cite{'Sniady2006a}.
Explicit but rather complicated formulas for the general genus $K_{k,k+1-2g}$
were found by Goulden and Rattan \cite{GouldenRattan2007} (for a more
elementary proof we refer to the work of Biane \cite{Biane2005/07})
and we shall discuss their result in Section \ref{sec:Goulden-Rattan}.

\subsection{The main result: proof of some conjectures of Lassalle}
Lassalle announced as a conjecture \cite{Lassalle-preprint2007} that there is
an additional structure in the genus expansion of Kerov polynomials. He claimed
that for a fixed genus $g$ there exists a symmetric function $f_g$ which
describes polynomials $K_{k,k+1-2g}$ and which is independent of $k$. Before
presenting his conjectures we need to prepare some notations.
 
A partition $\mu=(\mu_1\geq \mu_2\geq \cdots)$ is a weakly decreasing
sequence of nonnegative integers with finitely many non-zero elements. The
non-zero $\mu_i$ in a partition $\mu$ are
called the parts of $\mu$. We will denote by
$m_i(\mu)$ the number of parts of $\mu$ equal to $i$; by $l(\mu)$ the number
of parts of $\mu$; and denote $|\mu|=\mu_1+\mu_2+\cdots$. Following Lassalle,
we define $R_1 = 0$ and for a strictly positive integer $i$ we define
\begin{align}
\nonumber
\mathcal{R}_i &= (i-1)R_i, \\
\nonumber
\mathcal{R}_\mu &= \prod_{i}
\frac{\mathcal{R}_i^{m_i(\mu)}}{m_i(\mu)!}, \\
\label{eq:qi}
Q_i &= \sum_{|\mu|=i}\big(l(\mu)-1\big)!\ \mathcal{R}_\mu, \\
\nonumber
\mathcal{Q}_\mu &= \prod_{i} \frac{Q_i^{m_i(\mu)}}{m_i(\mu)!}.
\end{align}

As usual, we denote by $e_i$ the elementary symmetric functions, by $h_i$ the
complete symmetric functions and by $p_i$ the power-sum symmetric functions. For
any partition $\mu$, we denote by $e_{\mu}$, $h_{\mu}$ or $p_{\mu}$ their
product over the parts of $\mu$, and by $m_{\mu}$ the monomial symmetric
function --- the sum of all distinct monomials whose exponent is
a permutation of $\mu$.

The main result of this article is a proof of the following results
which were stated as the first and the sixth conjecture in the
paper \cite{Lassalle-preprint2007} by Lassalle.
\begin{theorem}
\label{theo:main}
For any $g\geq 1$ there exist inhomogeneous symmetric
functions $f_g$ and $h_g$, having maximal degree $4(g-1)$, such that
\begin{align}
\label{eq:lassalle1}
K_{k,k+1-2g} &= \binom{k+1}{3}  \sum_{|\mu|=k+1-2g}
\big(l(\mu) + 2g - 2\big)!\ f_g(\mu)\ \mathcal{R}_\mu \\
\label{eq:lassalle6}
 &= \binom{k+1}{3}  \sum_{|\mu|=k+1-2g}
(2g - 1)^{l(\mu)}\ h_g(\mu)\ \mathcal{Q}_\mu,
\end{align}
where $f_g(\mu) = f_g(\mu_1,\mu_2,\dots)$ and $h_g(\mu_1,\mu_2,\dots)$. These
symmetric functions are independent of $k$.
\end{theorem}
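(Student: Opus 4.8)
The plan is to exploit the Goulden--Rattan formula for $K_{k,k+1-2g}$ (to be recalled in Section~\ref{sec:Goulden-Rattan}) as the starting point. That formula already expresses each homogeneous part of the Kerov polynomial as a sum, over partitions $\mu$ with $|\mu|=k+1-2g$, of certain explicit rational coefficients times the monomial $R_{\mu_1}\cdots R_{\mu_{l(\mu)}}$. The two target identities \eqref{eq:lassalle1} and \eqref{eq:lassalle6} differ from the Goulden--Rattan shape only in \emph{how} the $k$-dependence is packaged: one must show that after dividing out the combinatorial prefactors ($\binom{k+1}{3}$ together with $(l(\mu)+2g-2)!\,\mathcal{R}_\mu$, resp.\ $(2g-1)^{l(\mu)}\,\mathcal{Q}_\mu$), the residual coefficient depends on $\mu$ alone and is a \emph{symmetric} function of the parts of $\mu$ of bounded degree $4(g-1)$. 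So the heart of the argument is an extraction-and-rescaling lemma, not a new character-theoretic input.

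First I would fix $g$ and analyze the $k$-dependence of the Goulden--Rattan coefficient of a fixed monomial $\mathcal{R}_\mu$. The key structural fact to isolate is that this coefficient, as a function of $k$, is (a constant multiple of) $\binom{k+1}{3}$; equivalently, that $K_{k,k+1-2g}$ viewed through the substitution $k\mapsto$ variable has all of its $k$-dependence concentrated in the single factor $\binom{k+1}{3}$, uniformly over the monomials appearing. For $g=1$ this is consistent with the known $K_{k,k-1}$ of Śniady \cite{'Sniady2006a}, and for general $g$ it should follow from a careful reading of the Goulden--Rattan derivation (the surface/genus bookkeeping forces the Euler-characteristic factor to factor out). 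Once this is done, $f_g(\mu)$ is \emph{defined} by $f_g(\mu) := \big[\mathcal{R}_\mu\big]K_{k,k+1-2g} \big/ \big(\binom{k+1}{3}(l(\mu)+2g-2)!\big)$, and it remains to prove (i) this is a polynomial in $\mu_1,\dots,\mu_{l(\mu)}$, (ii) it is symmetric, and (iii) it has degree $\le 4(g-1)$. Symmetry is automatic because $\mathcal{R}_\mu$ and the factorial prefactor are symmetric in the parts while $K_{k,k+1-2g}$ is a well-defined polynomial in the $R_i$'s; polynomiality and the degree bound are where the Goulden--Rattan formula must be massaged — the factors $(i-1)$ hidden inside $\mathcal{R}_i=(i-1)R_i$ are chosen precisely to absorb the worst denominators, so that the leftover is polynomial of controlled degree.

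For the second identity \eqref{eq:lassalle6} I would not redo the analysis from scratch but instead pass from the $\mathcal{R}_\mu$-expansion to the $\mathcal{Q}_\mu$-expansion by an explicit change of basis. The definition \eqref{eq:qi} of $Q_i$ in terms of the $\mathcal{R}_\mu$ with $|\mu|=i$ is triangular and invertible over $\mathbb{Q}$ (indeed it is essentially a logarithm/exponential relation between the generating series $\sum Q_i t^i$ and $\sum \mathcal{R}_i t^i$, with the $(l(\mu)-1)!$ being the free-cumulant-style coefficients), so each $\mathcal{R}_\mu$ is a finite $\mathbb{Q}$-combination of $\mathcal{Q}_\nu$'s with $|\nu|=|\mu|$. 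Substituting this into \eqref{eq:lassalle1} and collecting the coefficient of $\mathcal{Q}_\nu$ produces a candidate for $(2g-1)^{l(\nu)} h_g(\nu)$; one then checks that the resulting $h_g(\nu)$ is again symmetric of degree $\le 4(g-1)$, the factor $(2g-1)^{l(\nu)}$ being introduced to cancel the $l(\nu)$-dependent constants generated by the change of basis (so that what is left depends on $\nu$ and $g$ only through a bounded-degree symmetric polynomial). The main obstacle, I expect, is step (iii) above: proving the uniform degree bound $4(g-1)$ — this requires controlling, across all the monomials of a given genus, the precise order of vanishing/growth of the Goulden--Rattan coefficients, and is the place where one genuinely uses the fine combinatorial structure of that formula rather than just its existence. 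The rest is bookkeeping with symmetric functions and generating-series inversions.
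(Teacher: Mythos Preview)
Your starting point (the Goulden--Rattan formula) is the same as the paper's, and the paper indeed first shows that $k\,K_{k,k+1-2g}$ has the two desired shapes with symmetric functions $\tilde f_g,\tilde h_g$ of degree at most $4g$; this is Proposition~\ref{prop:main-prop}. The mechanism is a bit more systematic than what you sketch: one proves that multiplying by $C(t)$ raises ``genus'' by $\tfrac{1}{2}$ (this is what produces the factor $(l(\mu)+2g-2)!$, resp.\ $(2g-1)^{l(\mu)}$), and that applying $D=t\frac{d}{dt}$ raises degree by $1$; counting how many $C$'s and $D$'s occur in $P_\lambda(t)/C(t)$, together with the fact that $\hat m_\lambda$ is a polynomial in $k$ of degree $|\lambda|+l(\lambda)$, then gives the bound $4g$. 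Your outline does not really explain where the prefactor $(l(\mu)+2g-2)!$ comes from; this is the content of Lemma~\ref{lem:nice} and is not quite ``bookkeeping''.

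The genuine gap is the step you describe as ``the key structural fact'': that the $k$-dependence of each coefficient is concentrated in $\binom{k+1}{3}$. You assert this ``should follow from a careful reading of the Goulden--Rattan derivation'' via ``surface/genus bookkeeping'' forcing an ``Euler-characteristic factor'' out. The paper does \emph{not} extract $\binom{k+1}{3}$ from Goulden--Rattan, and there is no visible topological reason for that precise factor there (for instance, already $\hat m_{(2)}=\sum_{i=1}^{k-1} i^2$ is not divisible by $\binom{k+1}{3}$). Instead the paper proves divisibility of $\tilde f_g$ and $\tilde h_g$ by $k^2(k-1)(k+1)$ via an arithmetic argument: Lemma~\ref{lem:sie-dzieli} shows that if the coefficients of $L_{p+\Delta}$ are integers divisible by $p$ for infinitely many primes $p$, then one can factor $(k-\Delta)$ out of the governing symmetric function. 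This is then fed by Lemma~\ref{lem:div}, which establishes that $\Sigma_p-R_{p+1}+2R_2$, $\Sigma_{p-1}-R_p$, and $\Sigma_{p+1}-R_{p+2}+R_3$ are all divisible by the odd prime $p$ (the last of these requires a nontrivial computation over a field of characteristic $p$). Applying the lemma with $\Delta=0,0,-1,1$ peels off the four linear factors. Nothing in your plan approaches this; the ``Euler-characteristic'' heuristic is a placeholder, not an argument.

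Finally, your identification of the main obstacle is inverted. Once the divisibility by $k\binom{k+1}{3}$ is in hand, the degree bound $4(g-1)=4g-4$ is immediate, since one is dividing a symmetric function of degree $\le 4g$ by a degree-$4$ polynomial in $k=|\mu|+2g-1$. The hard part is the divisibility, not the degree. Your proposed route to \eqref{eq:lassalle6} by change of basis from $\mathcal{R}_\mu$ to $\mathcal{Q}_\mu$ is plausible in principle, but the paper instead proves both expansions in parallel (the ``nice'' and ``very nice'' cases of Lemma~\ref{lem:nice}), which avoids having to track how the factor $(2g-1)^{l(\mu)}$ survives the basis change.
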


This result sheds some light on the structure of Kerov polynomials but it
also leads to many new open problems, in particular the positivity
conjectures of
Lassalle \cite{Lassalle-preprint2007} and his questions concerning combinatorial
interpretations of the coefficients in the expansions of the above symmetric
functions.

\subsection{General idea of the proof}
For a given positive integer $g$ we define a symmetric function
$k(\mu) := |\mu|+2g-1 = p_1(\mu) + 2g - 1$,
therefore for a given symmetric functions $f_g$ and $h_g$ we can define
symmetric functions $ \tilde{f}_g(\mu) := k(\mu) \binom{k(\mu)+1}{3} f_g(\mu)$
and $
\tilde{h}_g(\mu) := k(\mu) \binom{k(\mu)+1}{3} h_g(\mu)$. We notice that in
equations \eqref{eq:lassalle1} and \eqref{eq:lassalle6} we sum over
all partitions $\mu$ which satisfy $k(\mu) = k$.
Therefore the following proposition
is an immediate consequence of Theorem \ref{theo:main}.

\begin{proposition}
\label{prop:main-prop}
For any $g\geq 1$ there exist inhomogeneous symmetric
functions $\tilde{f}_g$ and $\tilde{h}_g$, having maximal degree $4g$, such
that
\begin{align*}
k\ K_{k,k+1-2g} &=  \sum_{|\mu|=k+1-2g}
\big(l(\mu) + 2g - 2\big)! \ \tilde{f}_g(\mu) \mathcal{R}_\mu \\ 
 &= \sum_{|\mu|=k+1-2g}
(2g - 1)^{l(\mu)}\ \tilde{h}_g(\mu) \mathcal{Q}_\mu,
\end{align*}
where $\tilde{f}_g(\mu) = \tilde{f}_g(\mu_1,\mu_2,\dots)$ and $\tilde{h}_g(\mu)
= \tilde{h}_g(\mu_1,\mu_2,\dots)$. These symmetric functions are independent of
$k$.
\end{proposition}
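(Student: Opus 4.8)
The plan is to deduce Proposition~\ref{prop:main-prop} from Theorem~\ref{theo:main} by a change of variables, essentially with no new work. The point is that in the sums on the right-hand sides of \eqref{eq:lassalle1} and \eqref{eq:lassalle6} the integer $k$ is not a free parameter: the constraint $|\mu| = k+1-2g$ forces $k = |\mu| + 2g - 1$, so $k$ is an affine function of $|\mu|$, and $|\mu|$ --- the sum of the parts of $\mu$ --- is a symmetric function of $\mu$; hence $k$ is itself a symmetric function of $\mu$ of degree $1$. Consequently the prefactor $k\binom{k+1}{3}$ equals
\[
  P_g(\mu) \;:=\; (|\mu|+2g-1)\,\binom{|\mu|+2g}{3}
  \;=\; \frac{(|\mu|+2g-1)^2\,(|\mu|+2g)\,(|\mu|+2g-2)}{6}
\]
on the whole range of summation, and $P_g$ is a polynomial in $|\mu|$, that is, a symmetric function of $\mu$ of degree $4$.

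Granting this, I would simply set $\tilde f_g := P_g \cdot f_g$ and $\tilde h_g := P_g \cdot h_g$. These are symmetric functions because a product of symmetric functions is symmetric, and they are independent of $k$ because $P_g$, $f_g$ and $h_g$ all are. Multiplying \eqref{eq:lassalle1} and \eqref{eq:lassalle6} by $k$ and replacing $k\binom{k+1}{3} f_g(\mu)$ by $\tilde f_g(\mu)$ and $k\binom{k+1}{3} h_g(\mu)$ by $\tilde h_g(\mu)$ --- which is legitimate since $|\mu| = k+1-2g$ throughout the sum --- yields exactly the two displayed identities of the proposition.

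Finally I would check the degree bound: by Theorem~\ref{theo:main} the symmetric functions $f_g$ and $h_g$ have maximal degree $4(g-1)$, and $P_g$ has degree $4$, so $\tilde f_g$ and $\tilde h_g$ have maximal degree $4(g-1)+4 = 4g$, as asserted. There is no genuine obstacle here; the only two points deserving a word of care are (i) the observation that $k$ is pinned down by $|\mu|$ on the summation range, which is precisely what allows the $k$-dependent prefactor to be absorbed into a bona fide symmetric function of $\mu$ rather than merely a function of $\mu$, and (ii) the elementary degree count. Everything else is a formal substitution.
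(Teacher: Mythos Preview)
Your algebra is correct, and indeed the paper itself remarks in the paragraph immediately preceding Proposition~\ref{prop:main-prop} that the proposition ``is an immediate consequence of Theorem~\ref{theo:main}'' by exactly the substitution you describe. So as a formal implication, Theorem~\ref{theo:main} $\Rightarrow$ Proposition~\ref{prop:main-prop}, your argument is fine.

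The problem is circularity. In the paper's logical architecture, Proposition~\ref{prop:main-prop} is not a corollary of Theorem~\ref{theo:main}; it is a \emph{lemma towards} Theorem~\ref{theo:main}. The paper first proves Proposition~\ref{prop:main-prop} directly (Section~\ref{sec:existence}), then establishes some arithmetic divisibility properties of Kerov polynomials (Section~\ref{sec:divisibility}), and only then deduces Theorem~\ref{theo:main} by showing that $\tilde f_g$ and $\tilde h_g$ are divisible by the degree-$4$ polynomial $P_g(\mu)=k\binom{k+1}{3}$. So invoking Theorem~\ref{theo:main} to prove Proposition~\ref{prop:main-prop} assumes what the paper is ultimately trying to prove.

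The paper's actual, independent proof of Proposition~\ref{prop:main-prop} goes through the Goulden--Rattan formula (Theorem~\ref{theo:g-r}):
\[
k\,K_{k,k+1-2g} \;=\; -[t^{k+1-2g}] \sum_{\lambda\vdash 2g} \hat m_\lambda\, \frac{P_\lambda(t)}{C(t)}.
\]
One shows (Proposition~\ref{prop:it-is-nice} and Lemma~\ref{lem:nice}) that each $P_\lambda(t)/C(t)$ is a linear combination of ``(very) nice'' generating functions of genus $|\lambda|/2=g$ and degree at most $|\lambda|-l(\lambda)$, where ``nice'' means precisely that the coefficient of $t^k$ has the form $\sum_{|\mu|=k}(l(\mu)+2g-2)!\,f(\mu)\,\mathcal R_\mu$ for a symmetric function $f$. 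Since $\hat m_\lambda$ is a polynomial in $k=|\mu|+2g-1$ of degree $|\lambda|+l(\lambda)$, the total degree of the resulting symmetric function is at most $(|\lambda|-l(\lambda))+(|\lambda|+l(\lambda))=2|\lambda|=4g$. This is the content you are missing: the construction of $\tilde f_g$ and $\tilde h_g$ from scratch, without presupposing the existence of $f_g$ and $h_g$.
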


In fact, the opposite implication holds true as well and Theorem
\ref{theo:main} is a consequence of Proposition \ref{prop:main-prop}: roughly
speaking we will show that the symmetric functions $\tilde{f}_g$ and
$\tilde{h}_g$ are divisible by the polynomial $k \binom{k+1}{3}$. One can
notice that Proposition \ref{prop:main-prop} stated that $\tilde{f}_g$ and
$\tilde{h}_g$ are independent of $k$, so the divisibility which  we will show is
a divisibility of symmetric functions $\tilde{f}_g$ and
$\tilde{h}_g$ by the symmetric function $K$ which has a property that for any
partition $\mu$ such that $|\mu| = k +1 - 2g$ we have that $K(\mu) = k
\binom{k+1}{3}$. We shall explain precisely
what this divisibility means and
show in Section \ref{sec:divisibility} that it holds indeed by
studying the arithmetic properties of Kerov polynomials and their divisibility
by prime numbers.

The remaining difficulty is to prove Proposition \ref{prop:main-prop}. We shall
do it in Section \ref{sec:existence} by analysis of the Goulden-Rattan formula.

Section \ref{sec:technical} is a presentation of technical and complicated
proofs of lemmas which are used in the previous sections.


\section{Goulden-Rattan formula and existence of symmetric polynomials}
\label{sec:existence}

\subsection{Power series $P_\lambda$}
Following Goulden and Rattan \cite{GouldenRattan2007} we define
\begin{equation} C(t)=\frac{1}{1-\sum_{i\geq 2}\mathcal{R}_i t^i} = 
\sum_{\mu} t^{|\mu|} l(\mu)!\ \mathcal{R}_\mu.
\end{equation}
Let $D=t \frac{d}{dt}$
and define for $m\geq 1$
$$
P_m(t)=-\frac{1}{m!}C(t)\big(D+m-2\big)C(t)\cdots
\big(D+1\big)C(t)DC(t).$$
For example, we have
\begin{align}
\nonumber P_1(t)&=-C(t),\\ 
\nonumber P_2(t)&=-\frac{1}{2}C(t)DC(t),\\
\nonumber
P_3(t)&
=-\frac{1}{6}
C(t)(D+1)C(t)DC(t) \\
\nonumber &= -\frac{1}{6}\left[C(t)DC(t)DC(t) + C(t)^2DC(t)\right] \\
\nonumber &= 
-\frac{1}{6}
\left[C(t)D\big(C(t)\cdot DC(t)\big) + C(t)^2DC(t)\right] \\
\nonumber &=
-\frac{1}{6}\left[C(t)\big(DC(t)\big)^2+C(t)^2D^2C(t)+C(t)^2DC(t)\right] .
\end{align}
Finally, for a partition $\lambda$, we
write $P_{\lambda}(t)=\prod_{j=1}^{l(\lambda)} P_{\lambda_j}(t)$.

For $p = (p_0,\dots,p_l) \in \N^{l+1}$ we define
$$ E(p) := C(t)^{p_0} D C(t)^{p_1} \cdots D C(t)^{p_l}. $$

\begin{lemma}
\label{lem:admissible}
\
\begin{enumerate}[label=(\alph*)]
 \item \label{admissible:A}
Let $p \in \N^{l+1}, q \in \N^{m+1}$ and denote by $|p| := p_0 + \cdots + p_l$
($|q| = q_0 + \cdots q_m$ respectively). Then
$$E(p)\cdot E(q) = \sum_{\substack{r \in \N^{l+m+1}, \\ |r| = |p|+|q|}}c^{p,q}_r
E(r),$$
where $c^{p,q}_r \in \Z$.
\item \label{admissible:B}
For any partition $\lambda$ the fraction $\frac{P_{\lambda}(t)}{C(t)}$
is a linear combination of terms $E(p)$ where $p \in \N^k$ such that $|p| =
|\lambda|-1$ and $k \leq| \lambda|-l(\lambda)+1$.
\end{enumerate}
\end{lemma}
\begin{proof}
Let $p = (p_0,\dots,p_l) \in \N^{l+1}$, $q = (q_0,\dots,q_m) \in \N^{m+1}$.
We will show part \ref{admissible:A} by induction on $l$. It is obvious for
$l = 0$. For $l = 1$
we have:
$$E(p)\cdot E(q) = E(p_0,p_1 + q_0, q_1,q_2,\dots,q_m) -
E(p_0+p_1,q_0,q_1,\dots,q_m)$$
%
by the Leibniz rule.
Let us
assume, that the inductive assertion holds for some $l \geq 1$ and let $p =
(p_0, \dots,p_{l+1})$. Then by the
Leibniz rule we have that
\begin{equation}
\label{leibnizA}
E(p)\cdot E(q) = C(t)^{p_0}D \left[ E(p')\cdot E(q)\right] -
C(t)^{p_0+p_1}\left[ E(p'')\cdot E(q')\right],
\end{equation}
where $p' = (p_1,\dots,p_{l+1})$, $p'' = (0,p_2,\dots,p_{l+1})$, $q' =
(0,q_0,\dots,q_m)$.
By the inductive assertion, the right hand side of \eqref{leibnizA} is equal to
$$\sum_{\substack{\alpha \in \N^{l+m+1}, \\ |\alpha| = |p|+|q|-p_0}} c_\alpha
C(t)^{p_0} D
E(\alpha) - \sum_{\substack{ \beta \in \N^{l+m+2}, \\|\beta| = |p|+|q|}}c_\beta
E(\beta),$$ 
where $c_\alpha,c_\beta \in \Z$. But it means that
$$E(p)\cdot E(q) = \sum_{\substack{r \in \N^{l+m+2}, \\|r| = |p|+|q|}} c^{p,q}_r
E(r),$$
where $c^{p,q}_r \in \Z$ which finishes the proof of part \ref{admissible:A}.

For part \ref{admissible:B} we notice that each function $P_m(t)$ is a linear
combination of $E(p)$, where $p \in \N^k, |p| = m$ and $k \leq m-1$ and we apply
part
\ref{admissible:A}.
\end{proof}

\subsection{Polynomial structure of coefficients of $P_\lambda$}

\begin{definition}
If $f$ is a symmetric function of degree $d$ and $2g\geq 2$ is an integer then
the formal power series
\begin{equation}
\label{eq:postac-F-nice}
F(t)=  \sum_{\mu}  t^{|\mu|}
\big(l(\mu) + 2g - 2\big)!\ f(\mu)\ \mathcal{R}_\mu
\end{equation}
will be called \emph{a power-sum of the
first kind with degree $d$ and genus $g$}
and the formal power series
\begin{equation}
\label{eq:postac-F-vnice}
F(t)=  \sum_{\mu}  t^{|\mu|}(2g-1)^{l(\mu)}\ f(\mu)\ \mathcal{Q}_\mu
\end{equation}
will be called \emph{a power-sum of the second kind with degree $d$ and genus
$g$}.
\end{definition}

\begin{lemma}
\label{lem:nice}
\
\begin{enumerate}[label=(\alph*)]
  \item \label{case1}
  $C(t)$ is a power-sum of the
first (respectively, second) kind with degree $0$ and genus $1$.
  \item \label{case1.5}
If $F(t)$ is a  power-sum of the
first (respectively, second) kind with degree $d$ and
genus $g$ then $D F(t)$ is a power-sum of the
first (respectively, second) kind with degree
$d+1$ and genus $g$.
  \item \label{case2}
If $F(t)$ is a power-sum of the
first (respectively, second) kind with degree $d$ and genus
$g$ then $C(t) F(t)$
is a power-sum of the
first (respectively, second) kind with degree $d$ and genus
$g+\frac{1}{2}$.
\end{enumerate}
\end{lemma}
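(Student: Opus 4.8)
The plan is to obtain all three statements by unwinding the definitions, the only genuinely delicate point being a stability (``independence of the number of variables'') argument hidden inside part~\ref{case2}.

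Part~\ref{case1}. In the nice case one simply takes $f\equiv 1$: then \eqref{eq:postac-F-nice} with $g=1$ becomes $\sum_{k\ge 0}t^k\sum_{|\mu|=k}l(\mu)!\,\mathcal{R}_\mu$, which is precisely the closed form for $C(t)$ recorded just after its definition. For the very nice case one needs the identity $C(t)=\exp\bigl(\sum_{i\ge 2}Q_i t^i\bigr)$, which I would prove by the exponential formula: writing $A(t)=\sum_{i\ge 2}\mathcal{R}_i t^i$ (which has no constant term), we have $\log C(t)=-\log(1-A(t))=\sum_{n\ge 1}A(t)^n/n$, and collecting the monomials of $A(t)^n$ according to the partition $\mu$ they determine (so that $l(\mu)=n$) identifies $[t^i]\log C(t)$ with $\sum_{|\mu|=i}(l(\mu)-1)!\,\mathcal{R}_\mu=Q_i$ by \eqref{eq:qi}; exponentiating and again collecting monomials by partitions gives $C(t)=\sum_{k\ge 0}t^k\sum_{|\mu|=k}\mathcal{Q}_\mu$, i.e.\ \eqref{eq:postac-F-vnice} with $f\equiv 1$ and $g=1$.

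Part~\ref{case1.5} is immediate: $D=t\frac{d}{dt}$ multiplies the coefficient of $t^k$ by $k$, and $k=|\mu|$ in the inner sums of \eqref{eq:postac-F-nice} and \eqref{eq:postac-F-vnice}, so applying $D$ replaces $f(\mu)$ by $|\mu|\,f(\mu)$; since $|\mu|=\mu_1+\mu_2+\cdots$ is the value at $\mu$ of the first power-sum symmetric function $p_1$, the series $DF(t)$ is again nice (resp.\ very nice) of the same genus, with $f$ replaced by the degree-$(d+1)$ symmetric function $p_1f$. For part~\ref{case2} I would multiply $C(t)F(t)$ using the closed forms of $C(t)$ from part~\ref{case1}, extract $[t^k]$, and obtain a sum over pairs $(\mu,\nu)$ with $|\mu|+|\nu|=k$, the weight $f(\mu)$ coming from the $F$-factor. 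The elementary identities $\mathcal{R}_\mu\mathcal{R}_\nu=\mathcal{R}_\rho\prod_i\binom{m_i(\rho)}{m_i(\mu)}$ and $\mathcal{Q}_\mu\mathcal{Q}_\nu=\mathcal{Q}_\rho\prod_i\binom{m_i(\rho)}{m_i(\mu)}$ for $\rho=\mu\sqcup\nu$, together with the fact that $\prod_i\binom{m_i(\rho)}{m_i(\mu)}$ is the number of subsets of the index set $\{1,\dots,l(\rho)\}$ realizing a fixed split $\rho=\mu\sqcup\nu$, allow this pair sum to be rewritten as a sum over subsets $S$; after this bookkeeping the coefficient of $\mathcal{R}_\rho t^{|\rho|}$ in $C(t)F(t)$ equals
$$\sum_{S\subseteq\{1,\dots,l(\rho)\}}(l(\rho)-|S|)!\,(|S|+2g-2)!\,f(\rho|_S)$$
in the nice case, and the analogous sum $\sum_{S}(2g-1)^{|S|}f(\rho|_S)$ in the very nice case, where $\rho|_S=(\rho_j)_{j\in S}$. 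Since $l(\rho)+2g-1=l(\rho)+2(g+\tfrac12)-2$ and $2g=2(g+\tfrac12)-1$, the lemma will follow once we know that, after dividing by $(l(\rho)+2g-1)!$, resp.\ $(2g)^{l(\rho)}$, these expressions are the values $\tilde f(\rho)$ of a single symmetric function $\tilde f$ of degree $d$.

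The main obstacle is exactly this last point: each summand above genuinely depends on $l=l(\rho)$, so one must check that the particular linear combination in which they are assembled is ``stable'', i.e.\ that the family of symmetric polynomials obtained for $l=1,2,\dots$ (which have bounded degree) is compatible under setting the last variable equal to $0$ --- this is the standard criterion for recognizing an element of the ring of symmetric functions $\Lambda$. Since setting a variable to $0$ inside a symmetric function merely removes it, a term $f(\rho|_S)$ with $l\in S$ becomes $f(\rho|_{S\setminus\{l\}})$; splitting the subset sum according to whether $l\in S$ and reindexing then reduces the compatibility to the numerical identities $(l-j)!\,(j+2g-2)!+(l-1-j)!\,(j+2g-1)!=(l+2g-1)\,(l-1-j)!\,(j+2g-2)!$ in the nice case --- so that the extracted factor $l+2g-1$ cancels the ratio $(l+2g-1)!/(l+2g-2)!$ --- and $1+(2g-1)=2g$ in the very nice case. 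Because each $f(\rho|_S)$ is homogeneous of degree $d$ in $\rho_1,\dots,\rho_{l(\rho)}$, the resulting $\tilde f$ is homogeneous of degree $d$, and the genus has increased by $\tfrac12$, which is the assertion of part~\ref{case2}.
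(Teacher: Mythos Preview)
Your proof is correct; parts~(a) and~(b) coincide with the paper's, while for part~(c) you take a genuinely different route. The paper first decomposes $f$ by linearity into symmetric functions of the special shape $f(\mu)=\sum_{j_1<\cdots<j_e}P(\mu_{j_1},\dots,\mu_{j_e})$ with $P$ vanishing whenever an argument is zero (e.g.\ monomial symmetric functions), then passes from partitions to compositions and computes the product $C(t)F(t)$ directly: the inner sum collapses via the hockey-stick identity $\sum_{m=e}^{M}(m-e+1)\cdots(m+2g-2)=\frac{(M-e+1)\cdots(M+2g-1)}{e+2g-1}$ in the nice case and the binomial identity $\sum_{m=e}^{M}\frac{(2g-1)^{m-e}}{(m-e)!(M-m)!}=\frac{(2g)^{M-e}}{(M-e)!}$ in the very nice case, producing the explicit new function $\tilde f=f/(e+2g-1)$, respectively $\tilde f=f\,(2g-1)^e/(2g)^e$. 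Your subset-sum/stability argument avoids this decomposition and handles a general $f$ in one stroke, at the price of not yielding a closed form for $\tilde f$; conversely, the paper's approach gives that closed form but needs the preliminary linear reduction to the special $P$-shape. One minor slip: in your last sentence, $f(\rho|_S)$ need not be \emph{homogeneous} of degree $d$, only of degree at most $d$, but this does not affect the conclusion.
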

\begin{proof}
In order to prove point \ref{case1} it suffices to notice that
$$C(t)=\sum_{\mu}  t^{|\mu|} l(\mu)!\ 
\mathcal{R}_\mu = \sum_{\mu}  t^{|\mu|}\mathcal{Q}_\mu.$$

In order to prove point \ref{case1.5} let $F(t)$ be in the form
\eqref{eq:postac-F-nice}. Then
$$
DF(t)=\sum_{\mu}  t^{|\mu|}
\big(l(\mu) + 2g - 2\big)!\ \big[ (p_1(\mu) f(\mu)\big] \
\mathcal{R}_\mu
$$
is again of the form \eqref{eq:postac-F-nice}.

If $F(t)$ is of the form \eqref{eq:postac-F-vnice}, then
$$
DF(t)=\sum_{\mu}  t^{|\mu|}
(2g-1)^{l(\mu)}\ \big[ (p_1(\mu) f(\mu)\big] \
\mathcal{Q}_\mu
$$
is again of the form \eqref{eq:postac-F-vnice} which shows part \ref{case1.5}.

For part \ref{case2} we can
assume that the symmetric function $f$ is
equal to monomial symmetric function $m_{\lambda}$ for some partition
$\lambda$.

We define
\begin{align*} C_n & = \sum_{|\mu|=n} l(\mu)!\ \mathcal{R}_\mu = \sum_{|\mu|=n}
\mathcal{Q}_\mu, \\
\quad
\mathcal{C}_\mu & =\prod_{i \ge 2} \frac{C_i^{m_i(\mu)}}{m_i(\mu)!}.
\end{align*}
The correspondence between these three families ($Q, R$ and $C$) is given by
\begin{equation*}
\begin{split}
Q_n&= \sum_{|\mu|=n} {(-1)}^{l(\mu)} (l(\mu)-1)!  \,  \mathcal{C}_\mu,\\
-\mathcal{R}_n&=\sum_{|\mu|=n} {(-1)}^{l(\mu)} \,  \mathcal{Q}_\mu=
\sum_{|\mu|=n} {(-1)}^{l(\mu)} l(\mu)!\,  \mathcal{C}_\mu.
\end{split}
\end{equation*}  
Following Lassalle, \cite{Lassalle-preprint2007} we define the
(formal) alphabet
$\A$ by
$$ \mathcal{R}_i = - h_i(\A), \quad Q_i = - p_i(\A) / i, \quad
C_i = (-1)^{i} e_i(\A).$$
Writing
$$ u_{\mu} = l(\mu)! / \prod_{i \geq 1} m_i(\mu), \quad \epsilon_{\mu} =
(-1)^{n - l(\mu)}, \quad z_{\mu} = \prod_{i \geq 1} i^{m_i(\mu)}m_i(\mu)!, $$
the previous relations can be understood in a frame of symmetric functions
theory, and they are merely the classical properties~\cite[pp.~25 and
33]{Macdonald1995}
\begin{equation*}
\begin{split}
p_n&=-n \sum_{|\mu|=n} {(-1)}^{l(\mu)} \,  u_\mu h_\mu/l(\mu)
=-n \sum_{|\mu|=n} \epsilon_\mu  u_\mu e_\mu/ l(\mu),\\
e_n&= \sum_{|\mu|=n} \epsilon_\mu u_\mu h_\mu= \sum_{|\mu|=n} \epsilon_\mu
z_{\mu}^{-1} p_{\mu},\\
h_n&=\sum_{|\mu|=n} z_{\mu}^{-1} p_{\mu}=
\sum_{|\mu|=n} \epsilon_\mu u_\mu e_\mu.
\end{split}
\end{equation*}


Using this notation, it suffices to show
that
for any monomial symmetric function $m_{\lambda}$ and any $g$ we have
\begin{multline}
\label{sym:R}
\bigg(\sum_{\mu} t^{|\mu|} m_{\lambda}(\mu) \frac{(l(\mu) + 2g -
2)!}{l(\mu)!} (-1)^{l(\mu)} u_{\mu} h_{\mu} \bigg) \bigg(\sum_{\rho} t^{|\rho|}
(-1)^{l(\rho)} u_{\rho} h_{\rho} \bigg) = \\
\bigg(\sum_{\nu} t^{|\nu|} \frac{m_{\lambda}(\nu)}{l(\lambda) + 2g -1}
\frac{(l(\nu) + 2g -
1)!}{l(\nu)!} (-1)^{l(\nu)} u_{\nu} h_{\nu} \bigg),
\end{multline}
because the right hand side is a power-sum of the
first kind with degree $|\lambda|$ and genus $g+\frac{1}{2}$, and it
suffices to show, that
for any monomial symmetric function $m_{\lambda}$ and any $g$ we have
\begin{multline}
\label{sym:Q}
\bigg(\sum_{\mu} t^{|\mu|} m_{\lambda}(\mu) (2g - 1)^{l(\mu)}
(-1)^{l(\mu)} z^{-1}_{\mu} p_{\mu} \bigg) \bigg(\sum_{\rho} t^{|\rho|}
(-1)^{l(\rho)} z^{-1}_{\rho} p_{\rho} \bigg) = \\
\bigg(\sum_{\nu} t^{|\nu|} \left(\frac{2g - 1}{2g}\right)^{l(\lambda)}
m_{\lambda}(\nu) (2g)^{l(\nu)} (-1)^{l(\nu)} z^{-1}_{\nu}
p_{\nu} \bigg),
\end{multline}
because the right hand side is a power-sum of the
second kind with degree $|\lambda|$ and genus $g+\frac{1}{2}$.
In order to prove \eqref{sym:R} and \eqref{sym:Q} it is enough to use Lemma \ref{lem:technical1}.
\end{proof}

The main result of this subsection is the following proposition.
\begin{proposition}
\label{prop:it-is-nice}
 $\frac{P_{\lambda}(t)}{C(t)}$ is a linear combination of power-sums of the
first (respectively, second) kind of genus
$\frac{|\lambda|}{2}$ and degree at most $|\lambda|-l(\lambda)$.
\end{proposition}
\begin{proof}
It is enough to apply Lemma \ref{lem:admissible} and Lemma \ref{lem:nice}.
\end{proof}

\subsection{Goulden-Rattan formula}

\label{sec:Goulden-Rattan}
For a partition $\lambda$ let $m_{\lambda}$ denote the monomial symmetric
function in
variables $x_1,x_2,\dots$. In
this paper we consider the particular evaluation of the monomial
symmetric function at $x_i=i$, for $i=1,\dots ,k-1$, and $x_i=0$,
for $i\geq k$, and write this as $\hat{m}_{\lambda}$. Let $A(t)$ be a formal
power series. We denote the coefficient of $t^k$ in $A(t)$ by $[t^k]A(t)$.

\begin{theorem}[Goulden and Rattan \cite{GouldenRattan2007}]
\label{theo:g-r}
For $g\geq 1$, $k\geq 2g-1$,
\begin{equation}
\label{eq:G-R}
\Sigma_{k,k+1-2g}=-\frac{1}{k}[t^{k+1-2g}]
\sum_{|\lambda| = 2g}\hat{m}_{\lambda}
\frac{P_{\lambda}(t)}{C(t)}.
\end{equation}
\end{theorem}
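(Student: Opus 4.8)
The plan is to reduce \eqref{eq:G-R} to the following explicit form of Biane's character formula \cite{Biane2003}:
\[
\Sigma_k \;=\; -\frac{1}{k}\,[t^{k+1}]\;\frac{1}{C(t)}\prod_{i=1}^{k-1}\Bigl(1+\sum_{j\geq 1}(it)^{j}\,P_{j}(t)\Bigr).
\]
Once this identity is available, \eqref{eq:G-R} follows by a short expansion, carried out below; the substantive work is in establishing it, which one obtains from Biane's theorem by a Lagrange-inversion computation, the algebraic bridge being the identity $C(t)=-1/\bigl(t^{2}K'(t)\bigr)$, where $K(t)=1/t+\sum_{i\geq 2}R_{i}t^{i-1}$ (the compositional inverse of the Cauchy transform of the transition measure of $\lambda$, and the series in which Biane's formula is naturally phrased). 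As a sanity check: in the displayed formula only the constant term $1$ of the product contributes in the top degree, and $-\frac1k[t^{k+1}]\frac1{C(t)}=[t^{k}]K(t)=R_{k+1}$, recovering the known leading term $K_{k,k+1}=R_{k+1}$.

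Granting the displayed formula, I would expand the product using the elementary generating-function identity
\[
\prod_{i=1}^{k-1}\Bigl(1+\sum_{j\geq 1}i^{j}\,u_{j}\Bigr)=\sum_{\lambda\,:\,l(\lambda)\leq k-1}\hat m_{\lambda}\;\prod_{j\geq 1}u_{j}^{\,m_{j}(\lambda)},
\]
valid for independent indeterminates $u_{1},u_{2},\dots$; it follows by choosing, from each factor $i$, either $1$ or a term $i^{j}u_{j}$, the injective assignments of the parts of $\lambda$ to the $k-1$ available slots producing precisely the evaluation $\hat m_{\lambda}$ of the monomial symmetric function. Specializing $u_{j}=t^{j}P_{j}(t)$ and using $\prod_{j}\bigl(t^{j}P_{j}(t)\bigr)^{m_{j}(\lambda)}=t^{|\lambda|}\,P_{\lambda}(t)$ turns the product into $\sum_{\lambda}\hat m_{\lambda}\,t^{|\lambda|}P_{\lambda}(t)$, whence
\[
\Sigma_k \;=\; -\frac1k\sum_{\lambda}\hat m_{\lambda}\,[t^{k+1-|\lambda|}]\,\frac{P_{\lambda}(t)}{C(t)}.
\]

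Next I would read off the homogeneous components. By Proposition \ref{prop:it-is-nice} (or directly, since $\frac{P_{\lambda}}{C}$ is a linear combination of products $C(t)^{p_{1}}DC(t)^{p_{2}}\cdots$ by the first Lemma above, and such products have the property in the grading where $R_{i}$ has degree $i$), each coefficient $[t^{n}]\,\frac{P_{\lambda}(t)}{C(t)}$ is a homogeneous polynomial of degree $n$ in the free cumulants. Hence the terms with $|\lambda|=k+1-d$ contribute precisely the degree-$d$ homogeneous part of $\Sigma_k$, namely $K_{k,d}(R_{2},R_{3},\dots)$, and taking $d=k+1-2g$, i.e.\ $|\lambda|=2g$, yields exactly \eqref{eq:G-R}; the hypotheses $g\geq 1$ and $k\geq 2g-1$, together with $\hat m_{\lambda}=0$ whenever $l(\lambda)>k-1$, take care of the boundary cases. (The terms with $|\lambda|$ odd must then sum to zero, which is Kerov's parity theorem; this fact is not needed here, the statement concerning only $|\lambda|=2g$.)

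The main obstacle is the step I glossed as ``a Lagrange-inversion computation'': producing from Biane's theorem the precise shifted differential-operator blocks $P_{m}(t)=-\frac{1}{m!}\,C(t)(D+m-2)C(t)\cdots(D+1)C(t)\,D\,C(t)$. The mechanism is the commutation rule $D\bigl(t^{n}f(t)\bigr)=t^{n}(D+n)f(t)$: iterating the Lagrange inversion formula to extract coefficients of compositions and powers of $K$ accumulates the shifts $0,1,\dots,m-2$, each iteration contributing one factor $C(t)$, and the constant $\frac1{m!}$ is the product of the Lagrange-inversion constants $\frac1j$. Carrying this out while keeping every constant correct, and handling the Leibniz interaction between $D$ and multiplication --- the same subtlety dealt with by the $(D-D_{\bullet})$ device in the proof of the first Lemma above --- is where the real effort lies. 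All the manipulations are legitimate at the level of formal power series, since each coefficient $[t^{n}]$ involves only the finitely many free cumulants $R_{2},\dots,R_{n}$.
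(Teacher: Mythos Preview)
The paper does not prove Theorem~\ref{theo:g-r}: it is quoted verbatim from Goulden and Rattan \cite{GouldenRattan2007} and used as a black box in the proof of Proposition~\ref{prop:main-prop}. There is therefore no in-paper argument to compare your sketch against; the relevant comparison is with the original Goulden--Rattan proof, and your outline is essentially theirs. Starting from Biane's product formula, passing via Lagrange inversion from the $K$-series to the $C$-series to obtain the displayed intermediate identity, then expanding the product through the monomial symmetric functions evaluated at $1,\dots,k-1$ and reading off homogeneous degrees---that is exactly how \cite{GouldenRattan2007} proceeds. Your expansion step and the degree-extraction argument (using that $[t^n]\,P_\lambda(t)/C(t)$ is homogeneous of degree $n$ in the free cumulants) are correct and routine.

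The one substantive gap you already name yourself: the Lagrange-inversion computation that produces the precise operator
\[
P_m(t)=-\frac{1}{m!}\,C(t)(D+m-2)C(t)\cdots(D+1)C(t)\,D\,C(t)
\]
is only described heuristically (``iterating accumulates the shifts $0,1,\dots,m-2$, each iteration contributes one factor $C(t)$''). That computation is the entire content of the theorem; the shifts and the constant $1/m!$ must come out exactly, and a reader cannot verify from your description that they do. As written, your proposal reduces the theorem to precisely the part of \cite{GouldenRattan2007} that requires work, so it is a correct road map rather than a proof. If you want it to stand on its own you would have to carry out that inversion in full; otherwise citing \cite{GouldenRattan2007} for the intermediate identity puts you in the same position as the present paper.
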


%

%
%
%

\subsection{Proof of Proposition \ref{prop:main-prop}}
\begin{proof}[Proof of Proposition \ref{prop:main-prop}]
Equation \eqref{eq:G-R} can be written in the form
$$k\
\Sigma_{k,k+1-2g}=-[t^{k+1-2g}]
\sum_{|\lambda| = 2g}\hat{m}_{\lambda}
\frac{P_{\lambda}(t)}{C(t)}.$$

The evaluation of the power sum symmetric function 
$$ \hat{p}_s = 1^s + \cdots + (k-1)^s$$
analogous to that for
$\hat{m}_\lambda$ is a polynomial in $k$ of degree $s+1$; it follows immediately
that the
evaluation of the power-sum symmetric function $\hat{p}_\lambda$ is a
polynomial in $k$ of degree $|\lambda|+l(\lambda)$. The monomial symmetric
function $m_{\lambda}$ is a linear combination of power-sum symmetric functions
$p_{\mu}$, where each partition $\mu$ which appears in this linear combination
is obtained from partition $\lambda$ by gluing some of their parts (see for
example \cite{Macdonald1995}). It
means that for each such $\mu$ we have
$$|\lambda|+l(\lambda) \geq  |\mu|+l(\mu),$$
and for this reason also 
$\hat{m}_\lambda$ is a polynomial in $k$ of degree at
most $|\lambda|+l(\lambda)$.

For any partition $\mu$ such that $|\mu| = k + 1-  2g $ we have $k =
p_1(\mu) + 2g -1$, where $p_1$ is a power symmetric function, hence
there exists a symmetric function $f_{\lambda}$ of degree
$|\lambda|+l(\lambda)$ which does not depend on $k$ such that $\hat{m}_{\lambda}
= f_{\lambda}(\mu)$.
Proposition \ref{prop:it-is-nice}
finishes the proof.
\end{proof}

\section{Divisibility of polynomials}
\label{sec:divisibility}

\subsection{Implications of divisibility}

At this step we proved Proposition \ref{prop:main-prop}. In order to
prove Theorem \ref{theo:main} we would like to show that for each integer $g
\geq 1$, functions $\tilde{f}_g$ and $\tilde{h}_g$ are divisible by the
symmetric function
$$(p_1 + 2g -1)\frac{(p_1 + 2g)(p_1 + 2g -1)(p_1 + 2g
-2)}{3!},$$ 
where $p_1$ denotes the power symmetric function. By word divisible
we mean that there exist symmetric functions $f_g$ and $h_\mu$
such that 
$$ \tilde{f}_g = (p_1 + 2g -1)\frac{(p_1 + 2g)(p_1 + 2g -1)(p_1 + 2g
-2)}{3!}f_g$$
and
$$ \tilde{h}_g = (p_1 + 2g -1)\frac{(p_1 + 2g)(p_1 + 2g -1)(p_1 + 2g
-2)}{3!}h_g.$$
Observe that for fixed $g \geq 1$ and for any partition $\mu$ there exists
number $k$ such that $|\mu| = k + 1 - 2g$ and then
$$ \tilde{f}_g(\mu) = k \binom{k+1}{3}f_g(\mu),$$
$$ \tilde{h}_g(\mu) = k \binom{k+1}{3}h_g(\mu)(\mu).$$
The idea of showing divisibility of symmetric function by symmetric function of
degree $1$ is similar to the case of showing divisibility of some polynomial by
some monomial. The main idea is dividing with a remainder and using a fact that
if some integer is divisible by infinite number of primes then it has to be
equal to zero. The remainning of this section is a formalisation of this idea.

The first difficulty is that we have to deal with polynomials in several
variables. Hence, in order to show some generalisation of the ``dividing with
remainder'' technique, we need the following technical lemma:

\begin{lemma}
\label{lem:zerozero}
Let $m$ be a fixed integer and $f$ be a polynomial in variables $x_k,\dots,x_l$
with the property that
$ f(\mu_k,\dots,\mu_l)=0$ for all integers $\mu_k,\dots,\mu_l\geq 1$ which
fulfill the following equations:
\begin{equation}
\label{eq:suma}
\sum_{k \leq j \leq l}\mu_j > m,
\end{equation}
\begin{equation}
\label{eq:wzrost}
\mu_i>\mu_{i+1}+\cdots+\mu_l
\end{equation}
for all values of $i$ for which it makes sense. Then $f=0$.
\end{lemma}

The proof of this lemma can be found in Section \ref{sec:technical}.

The next lemma is key for this section: it allows to translate information
about arithmetic properties of Kerov polynomials into information about the
polynomials governing the coefficients.
\begin{lemma}
\label{lem:sie-dzieli}
Let $f$, respectively $h$, be a symmetric function of degree at most $d$ with
rational coefficients, $g\geq 1$ be an integer; we define
\begin{align*} L_{k} =  & \sum_{|\mu|=k+1-2g}
\big(l(\mu) + 2g - 2\big)!\ f(\mu)\ \mathcal{R}_\mu, \\
\intertext{respectively,}
L'_{k}&= \sum_{|\mu|=k+1-2g} \big(2g - 1\big)^{l(\mu)}\ h(\mu)\ \mathcal{Q}_\mu
\end{align*}
and view it as a polynomial in $R_2,R_3,\dots$.

Assume that an integer $\Delta$ has a property that all coefficients of
$L_{p+\Delta}$ (respectively, all coefficients of $L'_{p+\Delta}$) are integers
divisible by $p$ for an infinite number of prime numbers $p$. Then there exists
a symmetric function $\tilde{f}$ (respectively, $\tilde{h}$) with rational
coefficients of  degree at most $d-1$ such that
\begin{align*} L_{k} &= (k-\Delta) \sum_{|\mu|=k+1-2g}
\big(l(\mu) + 2g - 2\big)!\ \tilde{f}(\mu)\ \mathcal{R}_\mu, \\
\intertext{respectively,} 
L'_k&= (k-\Delta) \sum_{|\mu|=k+1-2g}
\big(2g - 1\big)^{l(\mu)}\ \tilde{h}(\mu)\ \mathcal{Q}_\mu.
\end{align*}
\end{lemma}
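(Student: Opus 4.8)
The plan is to exploit the fact that the map sending a symmetric function to the associated polynomial in $R_2, R_3, \dots$ (via the $\mathcal{R}_\mu$- or $\mathcal{Q}_\mu$-expansion) is injective, so that an identity among the $L_k$'s can be pulled back to an identity among symmetric functions. First I would set up a basis: write $f$ in the basis of monomial symmetric functions $m_\nu$, so that the coefficient of a fixed monomial $\mathcal{R}_\mu$ (equivalently a fixed square-free monomial in the $R_i$'s, after accounting for the combinatorial factors $m_i(\mu)!$) in $L_k$ is, up to an explicit nonzero rational constant, a value $(l(\mu)+2g-2)!\,f(\mu)$. The hypothesis that $p \mid$ all coefficients of $L_{p+\Delta}$ for infinitely many primes $p$ then says: for every partition $\mu$ with $|\mu| = (p+\Delta)+1-2g$, the rational number $(l(\mu)+2g-2)!\,f(\mu)$ has $p$-adic valuation $\geq 1$ (for all but finitely many such $p$, we can ignore the denominators of $f$ and the factorial, which are bounded once we bound $l(\mu)$, but $l(\mu)$ is not bounded — see the obstacle below).

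The key reduction is that divisibility by $p$ forces the polynomial identity $\tilde f(\mu) := \frac{f(\mu)}{|\mu|+2g-2-\Delta}$ to again be (the evaluation of) a symmetric function, of degree one lower. Concretely: since $f$ is a symmetric function of degree $\leq d$, for each fixed length $\ell$ the restriction $\mu \mapsto f(\mu)$ to partitions of length exactly $\ell$ is a symmetric polynomial in $\mu_1, \dots, \mu_\ell$ of degree $\leq d$. The condition coming from $p \mid L_{p+\Delta}$ says that this polynomial, evaluated at every partition $\mu$ with $|\mu| = p+\Delta+1-2g$ and $l(\mu) = \ell$, gives a rational with positive $p$-valuation. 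Running over infinitely many $p$ gives infinitely many hyperplanes $\mu_1 + \cdots + \mu_\ell = $ const on which $f$ (times a bounded constant) vanishes mod $p$; I would argue that a nonzero polynomial cannot be divisible mod $p$ by $(\mu_1+\cdots+\mu_\ell - c)$ for all these values unless $f$ itself, as a polynomial over $\mathbb{Q}$, is divisible by $(\mu_1 + \cdots + \mu_\ell) - (\Delta + 1 - 2g)$ — i.e., by $|\mu| - \Delta$. Here one uses that the partition points are Zariski-dense enough on each such hyperplane (for $|\mu|$ large there are many partitions of that size with length exactly $\ell$, spanning the hyperplane) together with a bound on the height of the coefficients of $f$ to turn "divisible mod $p$ for infinitely many $p$" into "divisible over $\mathbb{Q}$". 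Doing this uniformly in $\ell$, I get $f(\mu) = (|\mu| - \Delta)\,\tilde f(\mu)$ with $\tilde f$ a symmetric function of degree $\leq d-1$, and then $L_k = (k - \Delta) \sum_{|\mu|=k+1-2g}(l(\mu)+2g-2)!\,\tilde f(\mu)\,\mathcal{R}_\mu$ as required. The very-nice case is identical, replacing $\mathcal{R}_\mu$ by $\mathcal{Q}_\mu$, $(l(\mu)+2g-2)!$ by $(2g-1)^{l(\mu)}$, and $f$ by $h$; the only thing used about these weights is that they are nonzero rationals depending only on $l(\mu)$ and $g$, so they do not interfere with the divisibility extraction.

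The main obstacle is the unboundedness of $l(\mu)$: the factor $(l(\mu)+2g-2)!$ (resp. $(2g-1)^{l(\mu)}$) introduces primes into the picture that have nothing to do with $f$, and as $|\mu|$ grows we see partitions of every length up to roughly $|\mu|/2$. I would handle this by stratifying by length: since $f$ has degree $\leq d$, for $\ell > d$ the symmetric polynomial $f$ restricted to length-$\ell$ partitions is determined by its values on length-$\leq d$ partitions (a symmetric function of bounded degree "stabilizes"), so it suffices to control finitely many strata $\ell \leq d$; on each such stratum the factorial factor is a fixed integer, coprime to all but finitely many $p$, and the argument goes through cleanly. Thus the real content is (i) the stabilization remark reducing to boundedly many lengths, (ii) a lemma that a polynomial in $\mathbb{Q}[\mu_1,\dots,\mu_\ell]$ whose values at partition points on infinitely many hyperplanes $\sum \mu_i = c_p$ are $p$-integral with positive valuation must be divisible by $\sum\mu_i - (\Delta+1-2g)$ over $\mathbb{Q}$ — which follows from clearing a common denominator, observing the numerator vanishes mod $p$ on a Zariski-dense set of each hyperplane hence is divisible mod $p$ by the linear form, and letting $p \to \infty$. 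I expect (ii), phrased correctly with the height/denominator bookkeeping, to be the delicate point; everything else is bookkeeping with the definitions of $\mathcal{R}_\mu$ and $\mathcal{Q}_\mu$.
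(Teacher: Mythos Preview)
Your approach for the nice case ($L_k$) is essentially the same idea as the paper's, phrased a bit less directly. The paper writes $f(\mu) = (|\mu|+2g-1-\Delta)\,\tilde f(\mu) + r(\mu_2,\mu_3,\dots)$ by polynomial division in the single variable $z = |\mu|+2g-1-\Delta$, then fixes $\mu_2,\dots,\mu_l$ (distinct, and with $\mu_2+\cdots+\mu_l\neq\Delta-2g$ so that $p\nmid(\mu_1-1)$), lets $\mu_1$ run with $p$, and reads off that the integer $r(\mu_2,\dots,\mu_l)$ is divisible by infinitely many primes, hence zero. This bypasses your stratification by length and the Zariski-density lemma, but the underlying mechanism is the same. (Minor slip: the linear form is $|\mu|+2g-1-\Delta$, not $|\mu|-\Delta$; your parenthetical expression was correct but the ``i.e.'' was not.)

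There is, however, a genuine gap in your treatment of the very-nice case ($L'_k$). You assert that it ``is identical, replacing $\mathcal{R}_\mu$ by $\mathcal{Q}_\mu$'' and that ``the only thing used about these weights is that they are nonzero rationals depending only on $l(\mu)$ and $g$.'' But $\mathcal{Q}_\mu$ is \emph{not} a monomial in $R_2,R_3,\dots$: by definition $Q_i = \sum_{|\nu|=i}(l(\nu)-1)!\,\mathcal{R}_\nu$ is already a sum of $R$-monomials, and $\mathcal{Q}_\mu = \prod_i Q_i^{m_i(\mu)}/m_i(\mu)!$ is a further product of such sums. Hence the coefficient of $R_{\mu_1}R_{\mu_2}\cdots$ in $L'_{p+\Delta}$ is not $(2g-1)^{l(\mu)}h(\mu)$ times a harmless factor; it is
\[
\sum_{\nu\geq\mu}(2g-1)^{l(\nu)}\big(l(\nu)-1\big)!\,h(\nu),
\]
summed over all partitions $\nu$ that can be obtained from $\mu$ by gluing parts together. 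To extract $h(\mu)$ you must invert this upper-triangular system. The paper does so by imposing the growth condition $\mu_i>\mu_{i+1}+\cdots+\mu_l$ (so that each gluing of parts of $\mu$ yields a distinct $\nu$, making the set of contributing $\nu'=(\nu_2,\nu_3,\dots)$ independent of $p$) and then running an induction on $l(\mu)$ to peel off the remainder $s(\mu')$ term by term. Without this extra argument, your divisibility extraction for $h$ does not go through.
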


\begin{proof}
For simplicity assume that the coefficients of $f$ (respectively, $h$)
are integer numbers; if this is not the case we multiply $L_{k}$ and $f$
(respectively, $L'_k$ and $h$) by some common multiple of the denominators.

Let $\mu=(\mu_1,\mu_2,\dots)$ be a sequence of indeterminates. We use the
notation $|\mu|=\mu_1+\mu_2+\cdots$ and define variable
$z=|\mu|+2g-1-\Delta$. The family of indeterminates $\mu$ can be alternatively
parametrized by $z,\mu_2,\mu_3,\dots$; we just use the substitution
$\mu_1=z+\Delta+1-2g-\mu_2-\mu_3-\cdots$.  Now we can consider $f,h \in
\Lambda[z]$ as polynomials in one variable $z$ with coefficients in the ring
$\Lambda$ of symmetric functions in variables $\mu_2,\mu_3,\dots$
and we can divide $f$ and $h$ by $z$ with
a remainder. Hence
\begin{align*} f(\mu)&= (|\mu|+2g-1-\Delta) \tilde{f}(\mu) +
r(\mu_2,\mu_3,\dots), \\ 
h(\mu)&= (|\mu|+2g-1-\Delta) \tilde{h}(\mu) +
s(\mu_2,\mu_3,\dots)  
\end{align*}
for some $\tilde{f}, \tilde{h} \in \Lambda[z]$ and for some $r, s \in \Lambda$.
Below we will show that $r=s=0$. This would imply that
\begin{align*} f(\mu)&= (|\mu|+2g-1-\Delta) \tilde{f}(\mu), \\
h(\mu)&= (|\mu|+2g-1-\Delta) \tilde{h}(\mu),
\end{align*}
where by substitution $z=|\mu|+2g-1-\Delta$ we view $\tilde{f},\tilde{h}\in
\Lambda[\mu_1]$ as polynomials in one variable $\mu_1$ with coefficients in
$\Lambda$. For any permutation $\pi$ of the set of positive integers which
moves only finitely many elements we have
\begin{multline*} (|\mu|+2g-1-\Delta) \tilde{f}(\mu_1,\mu_2,\dots) =
f(\mu_1,\mu_2,\dots) =\\ f(\mu_{\pi(1)},\mu_{\pi(2)},\dots) =
(|\mu|+2g-1-\Delta) \tilde{f}(\mu_{\pi(1)},\mu_{\pi(2)},\dots)
\end{multline*}
hence from the cancellation property
$$ \tilde{f}(\mu_1,\mu_2,\dots) = \tilde{f}(\mu_{\pi(1)},\mu_{\pi(2)},\dots)$$
is a symmetric function. In an analogous way we show that $\tilde{h}$ is a
symmetric function. The lemma follows now immediately.

It remains now to show that $r=s=0$. From the following on let $\mu_2> \cdots >
\mu_l$ be fixed integers bigger than
$1$ which fulfill Equations \eqref{eq:wzrost} and \eqref{eq:suma} with $m =
\Delta - 2g$. 
Define
$\mu_1=p+\Delta+1-2g-\mu_2-\mu_3-\cdots-\mu_l$,
where $p$ is a prime number. Notice that $\mu_1 - 1 < p$, because we required
that $\Delta+1-2g-\mu_2-\mu_3-\cdots-\mu_l < 1$. We consider the integral vector
$\mu=(\mu_1,\dots,\mu_l)$.

If $p$ is large enough, the parts of $\mu$ are all distinct and it follows
that
$$[R_{\mu_1} R_{\mu_2} \cdots R_{\mu_l}] L_{p+\Delta} =   (l+2g-2)!\ f(\mu)\
(\mu_1-1)(\mu_2-1)\cdots(\mu_l-1).$$
Also, if prime number $p$ is big enough then it does not divide
$(l+2g-2)!\ (\mu_1-1)(\mu_2-1)\cdots(\mu_l-1)$. It follows that
for infinitely
many prime numbers $p$ the number
$$ f(\mu)=  p \tilde{f}(\mu) + r(\mu_2, \dots, \mu_l)  $$
is divisible by $p$. We proved in this way that $r(\mu_2, \dots, \mu_l,
0,
\dots)$ is an
integer which is divisible by an infinite number of primes hence
$r(\mu_2, \dots, \mu_l, 0, \dots)=0$. Finaly Lemma \ref{lem:zerozero}
shows
that $r=0$.

If $p$ is big enough then condition
\eqref{eq:wzrost} holds true for all $1\leq i \leq l-1$ therefore every
partition resulting from $\mu$ by gluing together some of its parts cannot be
obtained by gluing the parts of $\mu$ in some other way (in fact, this
property is the main reason of introducing Equation \eqref{eq:wzrost}). 
From \eqref{eq:qi} it follows that
$$[R_{\mu_1} R_{\mu_2} \cdots] L'_{p+\Delta} = 
\sum_{\nu\geq\mu}(2g-1)^{l(\nu)}\big(l(\nu)-1\big)!\ h(\nu),$$
where $\nu\geq\mu$ means that partition $\nu$ can be obtained from partition
$\mu$ by gluing some parts of $\mu$.
We also know that for infinitely many prime numbers $p$ the following number
\begin{multline*}
\sum_{\nu\geq\mu}(2g-1)^{l(\nu)}\big(l(\nu)-1\big)!\ h(\nu) = \\
p\left[ \sum_{\nu\geq\mu}(2g-1)^{l(\nu)}\big(l(\nu)-1\big)!\
\tilde{h}(\nu)\right] 
+ \sum_{\nu\geq\mu}(2g-1)^{l(\nu)}\big(l(\nu)-1\big)!\ s(\nu')
\end{multline*}
is divisible by $p$, where for $\nu
= (\nu_1,\nu_2,\dots)$ we denote $\nu' = (\nu_2,\nu_3,\dots)$. Notice that the
set of values of $\nu'$ which contribute to the right
hand side does not depend on the choice of $p$, because only $\nu_1$ depends on
the choice of $p$.
Thus we proved that for infinitely many prime numbers $p$ the second
summand on the
right hand side does not depend on the choice of $p$ and 
is a fixed integer divisible
by all these prime numbers, hence 
\begin{equation}
\label{eq:upper-trangular}
 \sum_{\nu\geq\mu}(2g-1)^{l(\nu)}\big(l(\nu)-1\big)!\ s(\nu') = 0.  
\end{equation}

We will use induction over $k$ to show that $s(x_2,\dots,x_k,0,\dots)$ is equal
to the zero polynomial for any $k > 1$, which proves that $s=0$. Indeed, assume,
that
$s(x_2,\dots,x_k,0,\dots)$ is equal
to the zero polynomial for $k < l$. From
the induction hypothesis it follows that all summands
on the left-hand side of \eqref{eq:upper-trangular} vanish, except for
$\nu=\mu$, which shows that $s(\mu')=0$. We use Lemma
\ref{lem:zerozero} to show that $s=0$ as claimed.
\end{proof}

\subsection{Divisibility}

In order to prove Theorem \ref{theo:main} we would like to apply Lemma
\ref{lem:sie-dzieli} to Kerov polynomials. For this, we need some interesting
arithmetic properties of coefficients of Kerov polynomials. The next lemma,
which was formulated as a conjecture by Światosław Gal \cite{Gal}, shows some
properties of these kind.

\begin{lemma}
\label{lem:div}
If $p$ is an odd prime number then 
\begin{enumerate}[label=(\alph*)]
 \item \label{lem:gal_a} $ \frac{\Sigma_{p}-R_{p+1}+2 R_2}{p}$,
 \item \label{lem:gal_b} $ \frac{\Sigma_{p-1}-R_p}{p} $,
 \item \label{lem:gal_c} $\frac{\Sigma_{p+1}-R_{p+2}+R_3}{p}$
\end{enumerate}
are polynomials in free cumulants $R_2,R_3,\dots$
with nonnegative integer coefficients.
\end{lemma}

We will prove this Lemma in Section \ref{sec:technical}, because the proof is
very technical. Finally, we can prove the main result.

\subsection{Proof of the main result}

\begin{proof}[Proof of Theorem \ref{theo:main}]
We know by Proposition \ref{prop:main-prop} that for any integer $g\geq 1$ there
exist inhomogeneous symmetric
functions $\tilde{f}_g$ and $\tilde{h}_g$, having maximal degree $4g$, such
that
\begin{align*}
L_k := L'_k := k\ K_{k,k+1-2g} &=  \sum_{|\mu|=k+1-2g}
\big(l(\mu) + 2g - 2\big)! \ \tilde{f}_g(\mu) \mathcal{R}_\mu \\ 
 &= \sum_{|\mu|=k+1-2g}
(2g - 1)^{l(\mu)}\ \tilde{h}_g(\mu) \mathcal{Q}_\mu.
\end{align*}
By applying Lemma \ref{lem:sie-dzieli} for $\Delta=0$ we obtain that
\begin{align*}
k\ K_{k,k+1-2g} &=  k\ \sum_{|\mu|=k+1-2g}
\big(l(\mu) + 2g - 2\big)! \ \tilde{f}'_g(\mu) \mathcal{R}_\mu \\ 
 &= k\ \sum_{|\mu|=k+1-2g}
(2g - 1)^{l(\mu)}\ \tilde{h'}_g(\mu) \mathcal{Q}_\mu,
\end{align*}
where $\tilde{f}'_g, \tilde{h}'_g$ are symmetric functions of degree at most
$4g-1$.

Let 
\begin{align*}
L_k := L'_k := K_{k,k+1-2g} &=  \sum_{|\mu|=k+1-2g}
\big(l(\mu) + 2g - 2\big)! \ \tilde{f}'_g(\mu) \mathcal{R}_\mu \\ 
 &= \sum_{|\mu|=k+1-2g}
(2g - 1)^{l(\mu)}\ \tilde{h}'_g(\mu) \mathcal{Q}_\mu.
\end{align*}
Lemma \ref{lem:div}\ref{lem:gal_a} shows that Lemma \ref{lem:sie-dzieli} can be applied for
$\Delta=0$, thus
\begin{align*}
K_{k,k+1-2g} &=  k\ \sum_{|\mu|=k+1-2g}
\big(l(\mu) + 2g - 2\big)! \ \tilde{f}''_g(\mu) \mathcal{R}_\mu \\ 
 &= k\ \sum_{|\mu|=k+1-2g}
(2g - 1)^{l(\mu)}\ \tilde{h}''_g(\mu) \mathcal{Q}_\mu,
\end{align*}
where $\tilde{f}''_g, \tilde{h}''_g$ are symmetric functions of degree at most
$4g-2$.

Let 
\begin{align*}
L_k := L'_k := \frac{A K_{k,k+1-2g}}{k} &=  \sum_{|\mu|=k+1-2g}
\big(l(\mu) + 2g - 2\big)! A\ \tilde{f}''_g(\mu) \mathcal{R}_\mu \\ 
 &= \sum_{|\mu|=k+1-2g}
(2g - 1)^{l(\mu)}A\ \tilde{h}''_g(\mu) \mathcal{Q}_\mu,
\end{align*} 
where $A$ is the common multiple of the denominators of coefficients of
$\tilde{h}''_g$ and $\tilde{f}''_g$. We know that $L_{p-1} = L'_{p-1}$ has
integer coefficients as polynomial in $R_2,R_3,\dots$ and we know, thanks to
Lemma \ref{lem:div}\ref{lem:gal_b}, that for infinitely many prime
numbers $p$ the coefficients of $(p-1)L_{p-1} = (p-1)L'_{p-1}$ are divisible by $p$,
hence coefficients of $L_{p-1} = L'_{p-1}$ are also divisible by $p$, because
$p-1$ and $p$ are coprime. Then we can apply Lemma
\ref{lem:sie-dzieli} for
$\Delta=-1$ and we obtain that there exist symmetric functions $\tilde{f}'''_g,
\tilde{h}'''_g$
of degree at most $4g-3$ such that
\begin{align*}
K_{k,k+1-2g} &=  k\ (k+1)\ \sum_{|\mu|=k+1-2g}
\big(l(\mu) + 2g - 2\big)! \ \tilde{f}'''_g(\mu) \mathcal{R}_\mu \\ 
 &= k\ (k+1)\ \sum_{|\mu|=k+1-2g}
(2g - 1)^{l(\mu)}\ \tilde{h}'''_g(\mu) \mathcal{Q}_\mu,
\end{align*}
where $\tilde{f}''_g, \tilde{h}''_g$ are symmetric functions of degree at most
$4g-3$.

Similarly as before, thanks to Lemma \ref{lem:div}\ref{lem:gal_c} and thanks
to the fact that for prime number $p>2$ the numbers $p$ and $p+1$ are coprime and the
numbers $p+2$ and $p$ are coprime, we can apply Lemma \ref{lem:sie-dzieli} for
$\Delta=1$ for
\begin{align*}
L_k := L'_k := \frac{B K_{k,k+1-2g}}{k(k+1)} &=  \sum_{|\mu|=k+1-2g}
\big(l(\mu) + 2g - 2\big)! B\ \tilde{f}'''_g(\mu) \mathcal{R}_\mu \\ 
 &= \sum_{|\mu|=k+1-2g}
(2g - 1)^{l(\mu)}B\ \tilde{h}'''_g(\mu) \mathcal{Q}_\mu,
\end{align*} 
where $B$ is the common multiple of the denominators of coefficients of
$\tilde{h}'''_g$ and $\tilde{f}'''_g$ and we obtain that there exist symmetric
functions $\tilde{f}''''_g,
\tilde{h}''''_g$
of degree at most $4g-4$ such that
\begin{align*}
K_{k,k+1-2g} &= (k-1)\ k\ (k+1)\ \sum_{|\mu|=k+1-2g}
\big(l(\mu) + 2g - 2\big)! \ \tilde{f}''''_g(\mu) \mathcal{R}_\mu \\ 
 &= (k-1)\ k\ (k+1)\ \sum_{|\mu|=k+1-2g}
(2g - 1)^{l(\mu)}\ \tilde{h}''''_g(\mu) \mathcal{Q}_\mu,
\end{align*}
which finishes the proof.
\end{proof}

\section{Technical lemmas}
\label{sec:technical}

In this Section we prove all technical lemmas we used in this article.


\subsection{Identities on symmetric functions}
\begin{lemma}
 \label{lem:technical1}
The following abstract equalities hold:
\begin{equation}
\label{sym:R'}
 \sum_{\mu \cup \rho = \nu} m_{\lambda}(\mu) \frac{(l(\mu) + 2g -2)!}{l(\mu)!}
u_{\mu} u_{\rho} = \frac{m_{\lambda}(\nu)}{l(\lambda) + 2g -1} \frac{(l(\nu) +
2g - 2)!}{l(\nu)!} u_{\nu};
\end{equation}
\begin{equation}
\label{sym:Q'}
 \sum_{\mu \cup \rho = \nu} m_{\lambda}(\mu) (2g -1)^{l(\mu)}
z^{-1}_{\mu} z^{-1}_{\rho} = \left(\frac{2g-1}{2g}\right)^{l(\lambda)}
m_{\lambda}(\nu)(2g)^{l(\nu)} z^{-1}_{\nu}.
\end{equation}
\end{lemma}
\begin{proof}
From the definition of the monomial symmetric function we know that
$m_{\lambda}(\mu) = 0$ for all partitions $\mu$ such that $l(\mu) < l(\lambda)$.
We use an identity that for every integer $n$ such that $l(\lambda)
\leq n \leq l(\nu)$ we have
\begin{equation}
\label{sym:mon}
\binom{l(\nu) - l(\lambda)}{n - l(\lambda)} m_{\lambda}(\nu) = 
\sum_{\substack{\mu \cup
\rho = \nu, \\ l(\mu) = n}} m_{\lambda}(\mu)
\left( \prod_{i \geq 1} \frac{m_i(\nu)!}{m_i(\mu)!\ m_i(\rho)!}\right).
\end{equation}
Indeed, 
\begin{multline*}
\sum_{\substack{\mu \cup
\rho = \nu, \\ l(\mu) = n}} m_{\lambda}(\mu)
\left( \prod_{i \geq 1} \frac{m_i(\nu)!}{m_i(\mu)!\ m_i(\rho)!}\right) =
\sum_{\substack{\mu \subset \nu, \\ l(\mu) = n}} m_{\lambda}(\mu) = \\
\sum_{\substack{\mu' \subset \mu \subset \nu, \\ l(\mu) = n, l(\mu') = 
l(\lambda)}} m_{\lambda}(\mu') = 
\sum_{\substack{\mu' \subset \nu, \\ l(\mu') = l(\lambda)}} \sum_{\substack{\mu'
\subset \mu \subset \nu, \\ l(\mu) = n}} m_{\lambda}(\mu') = \\
\binom{l(\nu) -
l(\lambda)}{n - l(\lambda)} \sum_{\substack{\mu' \subset \nu, \\ l(\mu') =
l(\lambda)}} m_{\lambda}(\mu') = \binom{l(\nu) - l(\lambda)}{n - l(\lambda)}
m_{\lambda}(\nu),
\end{multline*}
where  $\sum_{\substack{\mu \subset \nu, \\ l(\mu) = n}}$ means that $\mu =
(\nu_{\sigma(1)}, \dots, \nu_{\sigma(n)})$ for some $\sigma \in \Sym(l(\nu))$
such that $\sigma(i) < \sigma(i+1)$ for all $i \in
\{1,\dots,n-1\}$ and we are summing over all such permutations $\sigma$.
Now, we can write the left hand side of \eqref{sym:R'} in the
following way:
\begin{multline*}
 \sum_{l(\lambda) \leq n \leq l(\nu)} \frac{(n + 2g -2)!}{n!}
\Bigg( \sum_{\substack{\mu \cup
\rho = \nu,\\ l(\mu) = n}} m_{\lambda}(\mu) \prod_{i \geq 1}
\frac{m_i(\nu)!}{m_i(\mu)\
!m_i(\rho)!}\Bigg) \times \\ \shoveright{u_{\nu} \frac{n!\ (l(\nu) -
n)!}{l(\nu)!} =} 
\\ 
\shoveleft{ \sum_{l(\lambda) \leq n \leq l(\nu)} \frac{(n + 2g -2)!}{n!}
\frac{(l(\nu) - l(\lambda))!}{(l(\nu) - n)!(n - l(\lambda))!}
m_{\lambda}(\nu) \times }
\\ 
\shoveright{ u_{\nu} \frac{n!\ (l(\nu) -
n)!}{l(\nu)!} = }\\ \frac{(l(\nu) - l(\lambda))!}{l(\nu)!} \sum_{l(\lambda) \leq
n
\leq l(\nu)}
\frac{(n + 2g -2)!}{(n - l(\lambda))!} m_{\lambda}(\nu) u_{\nu}
\end{multline*}
and using the equality 
$$
 \sum_{0 \leq i \leq b} \binom{a+i}{i} = \binom{a+b+1}{b}$$
we have
\begin{multline*}
\frac{(l(\nu) - l(\lambda))!}{l(\nu)!} \sum_{l(\lambda) \leq n \leq l(\nu)}
\frac{(n + 2g -2)!}{(n - l(\lambda))!} = \\ 
\frac{(l(\nu) -
l(\lambda))!(l(\lambda) + 2g -2)!}{l(\nu)!} \sum_{0 \leq n \leq l(\nu) -
l(\lambda)}
\frac{(n + l(\lambda) + 2g -2)!}{n!(l(\lambda) + 2g -2)!} = \\
\frac{(l(\nu) -
l(\lambda))!(l(\lambda) + 2g -2)!}{l(\nu)!} \sum_{0 \leq n \leq l(\nu) -
l(\lambda)} \binom{l(\lambda) + 2g -2 + n}{n} = \\
\frac{(l(\nu) -
l(\lambda))!\ (l(\lambda) + 2g -2)!\ (l(\nu) + 2g -1)!}{l(\nu)!\ (l(\nu) -
l(\lambda))!\ (l(\lambda) + 2g -1)!} = \frac{(l(\nu) + 2g
-1)!}{l(\nu)!(l(\lambda) + 2g -1)}
\end{multline*}
which finishes the proof of \eqref{sym:R'}.

Using  \eqref{sym:mon} we can write the left hand side of 
\eqref{sym:Q'} in the following form:
\begin{multline*}
 \sum_{l(\lambda) \leq n \leq l(\nu)} (2g -1)^{n}
\Bigg( \sum_{\substack{\mu \cup
\rho = \nu, \\l(\mu) = n}} m_{\lambda}(\mu) \prod_{i \geq 1}
\frac{m_i(\nu)!}{m_i(\mu)\ !m_i(\rho)!}\Bigg) z^{-1}_{\nu} = \\
\Bigg(\sum_{l(\lambda) \leq n \leq l(\nu)} (2g -1)^{n} \binom{l(\nu) -
l(\lambda)}{n - l(\lambda)}\Bigg) m_{\lambda}(\nu) z^{-1}_{\nu} = \\
(2g - 1)^{l(\lambda)} \Bigg(\sum_{0 \leq n \leq l(\nu) - l(\lambda)}
\binom{l(\nu) - l(\lambda)}{n}(2g -1)^n \Bigg) m_{\lambda}(\nu) z^{-1}_{\nu} =
\\
\left(\frac{2g-1}{2g}\right)^{l(\lambda)}
m_{\lambda}(\nu)(2g)^{l(\nu)} z^{-1}_{\nu},
\end{multline*}
where the last equality holds because of the binomial identity:
$$
\sum_{0 \leq n \leq m}\binom{m}{n}a^n = (a + 1)^m,
$$
which finishes the proof. 
\end{proof}


\subsection{Proof of Lemma \ref{lem:zerozero}}
\begin{proof}[Proof of Lemma \ref{lem:zerozero}]
Let $l$ be fixed; we will use backward induction over $k$. For $k=l$
we know that $f(\mu_k)=0$ for infinitely many choices of $\mu_k$. In other
words, polynomial $f$ has infinitely many zeros hence $f=0$, as claimed.

Let us assume that the inductive assertion holds for some $k\leq l$.
We can write
$$f(x_{k-1}, \dots, x_{l})
= \sum_{0 \leq i \leq N} x_{k-1}^{i}\ f_i(x_k, \dots, x_{l})$$
for some $N$. Let us fix integers $\mu_{k}, \dots, \mu_{l}$ bigger than $1$
which satisfy
\eqref{eq:wzrost} and \eqref{eq:suma}. Then we can find infinitely many
integer
numbers $\mu_{k-1}$ for which the vector $(\mu_{k-1},\dots,\mu_{l})$
satisfies both \eqref{eq:wzrost} and \eqref{eq:suma}; for each such a number we
have
$f(\mu_{k-1}, \dots, \mu_{l}) = 0$ therefore the polynomial $x_{k-1}\mapsto
f(x_{k-1},\mu_{k},\dots,\mu_{l})$ has infinitely many zeros hence it is the
zero polynomial and 
$f_i(\mu_{k}, \dots, \mu_{l})=0$. This shows that the inductive assertion
can be applied to the polynomial $f_i(x_{k},\dots,x_{l})$ and therefore
$f_i(x_{k},\dots,x_{l})=0$. This finishes the proof.
\end{proof}

\subsection{Arithmetic properties of Kerov polynomials}

\subsubsection{Auxiliary results}

We present two theorems we need to prove Lemma \ref{lem:div}.

\begin{theorem}[Dołęga, F\'eray, \'Sniady \cite{DolegaF'eray'Sniady2008}]
\label{theo:dfs}
Let $k\geq 1$ and let $s_2,s_3,\dots$ be a sequence of non-negative integers
with only finitely
many non-zero elements. The coefficient of $R_2^{s_2} R_3^{s_3} \cdots $ in
the Kerov polynomial $K_{k}$ is equal to the number of triples
$(\sigma_1,\sigma_2,q)$ with the following properties:
\begin{enumerate}[label=(\alph*)]
 \item \label{enum:first-condition}
$\sigma_1,\sigma_2$ is a factorization of the
cycle; in other words $\sigma_1,\sigma_2\in \Sym(k)$ are such that $\sigma_1
\circ \sigma_2=(1,2,\dots,k)$;
 \item \label{enum:ilosc2} the number of cycles of $\sigma_2$ is equal to the
number of\/ factors in
the product $R_2^{s_2} R_3^{s_3} \cdots $; in other words
$|C(\sigma_2)|=s_2+s_3+\cdots$;
 \item \label{enum:boys-and-girls} the total number of cycles of $\sigma_1$ and
$\sigma_2$ is equal to the degree of the product
$R_2^{s_2} R_3^{s_3} \cdots $; in other words
$|C(\sigma_1)|+|C(\sigma_2)|=2 s_2+3 s_3+4 s_4+\cdots$;
 \item \label{enum:kolorowanie} $q:C(\sigma_2)\rightarrow \{2,3,\dots\}$ is a
coloring of the cycles of
$\sigma_2$ with a property that each color $i\in\{2,3,\dots\}$ is used exactly
$s_i$ times (informally, we can think that $q$ is a map which to cycles of
$C(\sigma_2)$ associates the factors in the product $R_2^{s_2} R_3^{s_3}
\cdots$);
 \item \label{enum:marriage} for every set $A\subset C(\sigma_2)$ which is
nontrivial (i.e., $A\neq\emptyset$ and $A\neq C(\sigma_2)$) there are more than
$\sum_{i\in A} \big( q(i)-1 \big) $ cycles of\/ $\sigma_1$ which intersect
$\bigcup A$.
\end{enumerate}
\end{theorem}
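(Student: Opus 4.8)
The plan is to establish the claimed formula for the coefficients of $K_k$ first after evaluating both sides on the family of multi-rectangular (Stanley) Young diagrams $\mathbf{p}\times\mathbf{q}$, parametrised by $\mathbf{p}=(p_1,p_2,\dots)$ and $\mathbf{q}=(q_1,q_2,\dots)$; this is legitimate because the free cumulants $R_2,R_3,\dots$, viewed as functions of $\mathbf{p},\mathbf{q}$, take enough values on such diagrams that any polynomial relation in the $R_j$ holding on all of them is a formal identity. On the character side I would invoke the Stanley--Féray formula, which writes $\Sigma_k(\mathbf{p}\times\mathbf{q})$ as a sign-weighted sum over pairs $(\sigma_1,\sigma_2)\in\Sym(k)^2$ with $\sigma_1\circ\sigma_2=(1,2,\dots,k)$, each pair contributing a monomial with one $p$-variable per cycle of $\sigma_1$, one $q$-variable per cycle of $\sigma_2$, and a sign governed by $|C(\sigma_2)|$. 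On the free-cumulant side I would use Biane's description of the transition measure of $\mathbf{p}\times\mathbf{q}$ as a free convolution together with the moment--cumulant formula, which yields the expansion of each $R_j(\mathbf{p}\times\mathbf{q})$, hence of each monomial $R_2^{s_2}R_3^{s_3}\cdots$, in $\mathbf{p},\mathbf{q}$ through genus-zero (non-crossing) combinatorial objects.

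Comparing the two $\mathbf{p},\mathbf{q}$-expansions monomial by monomial then extracts the coefficient of $R_2^{s_2}R_3^{s_3}\cdots$ in $K_k$ as a \emph{signed} count of factorization data. Attaching to a pair $(\sigma_1,\sigma_2)$ a colouring $q\colon C(\sigma_2)\to\{2,3,\dots\}$ and the $R$-monomial $\prod_i R_{q(i)}$, one sees that conditions (b) (number of $\sigma_2$-cycles $=$ number of factors), (d) (colour multiplicities prescribed) and (c) (total number of cycles $=$ degree $2s_2+3s_3+\cdots$) are precisely the bookkeeping picking out the terms landing on $R_2^{s_2}R_3^{s_3}\cdots$; here (c) is the genus constraint, since $|C(\sigma_1)|+|C(\sigma_2)|+|C(\sigma_1\sigma_2)|=k+2-2g$ and $\sigma_1\sigma_2$ is a single $k$-cycle, so fixing the degree fixes $g$ with $k+1-2g$ equal to that degree. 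After this reorganisation the theorem becomes the purely combinatorial statement that the signed count of configurations obeying (a)--(d) equals the \emph{unsigned} count of triples obeying (a)--(e), with (e) being an indecomposability (``no deficient subset of $\sigma_2$-cycles'') condition.

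I would prove this combinatorial identity by a sign-reversing involution whose fixed-point set is exactly the indecomposable configurations. The signs entering the count come partly from the $(-1)^{|C(\sigma_2)|}$ of the Stanley--Féray formula and partly from the Möbius function of the non-crossing partition lattice carried in by the $R$-expansion; the net effect is that each configuration contributes $\pm 1$ and that contributions cancel in pairs unless the configuration admits no nontrivial $A\subseteq C(\sigma_2)$ that is deficient — i.e.\ unless (e) holds. Concretely I would fix a linear order on the relevant label sets, assign to each non-indecomposable configuration its canonical (say lexicographically least minimal) deficient set $A$, and define the involution by a local modification supported on $\bigcup A$ — an exchange of a single ``point'' between $\sigma_1$ and $\sigma_2$, compensated by the corresponding change of the accompanying non-crossing/colouring data — chosen so that it flips the overall sign while keeping $A$ canonical for the new configuration and preserving all the data that pins the $R$-monomial. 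The indecomposable configurations are then the fixed points, and a short Euler-characteristic computation, using (c), shows that each of them contributes exactly $+1$.

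The step I expect to be the main obstacle is precisely this involution: a single canonical rule must simultaneously be genuinely involutive (re-applying the surgery to the modified configuration must recover the same $A$ and undo the move), must never leave the class cut out by (a)--(d), and must be sign-reversing on every non-indecomposable configuration — and reconciling such a rule with the Hall-type deficiency inequality $\#\{\text{cycles of }\sigma_1\text{ meeting }\bigcup A\}>\sum_{i\in A}(q(i)-1)$ hidden in (e) is where the real combinatorial work lies. A secondary, more routine obstacle is the passage through multi-rectangular coordinates: one has to verify that the non-crossing expansion of $R_2^{s_2}R_3^{s_3}\cdots$ in $\mathbf{p},\mathbf{q}$ is triangular with respect to a suitable grading of monomials, so that coefficient extraction in the $R_j$ commutes with coefficient extraction in $\mathbf{p},\mathbf{q}$ and the two sides may indeed be matched term by term.
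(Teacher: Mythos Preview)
This theorem is not proved in the present paper at all: it is quoted verbatim from \cite{DolegaF'eray'Sniady2008} and used as an input to the proof of Lemma~\ref{lem:div}. There is therefore no ``paper's own proof'' to compare against; the authors treat the result as a black box.

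Your sketch is, in broad strokes, the strategy of the cited paper \cite{DolegaF'eray'Sniady2008}: evaluate both $\Sigma_k$ and the free cumulants on Stanley's multirectangular diagrams $\mathbf{p}\times\mathbf{q}$, use the Stanley--F\'eray formula on the character side and a non-crossing/map expansion on the cumulant side, and then match coefficients. You have also correctly located the real content in the cancellation step turning a signed count into the unsigned count governed by condition~(e). Two cautions, though. First, your description of the cancellation as a single sign-reversing involution built from a ``local surgery on $\bigcup A$'' is optimistic: in the original argument the cancellation is organised differently (through a differential/derivative viewpoint on the Stanley polynomial and an inclusion--exclusion over subsets of $C(\sigma_2)$ rather than a hand-built involution), precisely because making a canonical deficient set stable under a local move is delicate. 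Second, the ``triangularity'' you invoke to pass from equality of $\mathbf{p},\mathbf{q}$-polynomials to equality of $R$-coefficients is not automatic and in \cite{DolegaF'eray'Sniady2008} is handled by working with an explicit functional (a directional derivative along a suitable family of generalised Young diagrams) that isolates a single monomial $R_2^{s_2}R_3^{s_3}\cdots$; you would need an equivalent device. As written, your proposal is a reasonable plan but not yet a proof, and the paper under review does not supply one either.
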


We say that a partition $\Pi$ of the set $[k]=\{1,\dots,k\}$ is a \emph{pushing partition} if any pair of neighboring
elements of $[k]$ with respect to the cyclic order (i.e.~$i$ and $i+1$ are a pair of neighboring elements for
any $1\leq i\leq k-1$ as well as $1$ and $k$) does not belong to the same block of $\Pi$.

The cyclic group $\Z/ k \Z$ acts on the set of all partitions (respectively, the set of pushing partitions) of the set
$[k]$ as follows: for a partition $\Pi$ of $[k]$ and $i\in \Z/ k \Z$  we define $i + \Pi$ as the partition of $[k]$
with a property that $a,b$ belong to the same block of $\Pi$ if and only if $a',b'$ belong to the same block of $i +
\Pi$ for all $a,b,a',b'\in [k]$ such that $a+i \equiv a' \pmod k$, $b+i \equiv b' \pmod k$.

For any pushing partition $\Pi$ it is possible (see \cite{'Sniady2006a}) to define the normalized character
$\Sigma_\Pi$. It has a property that $\Sigma_\Pi=\Sigma_\pi$, where the right-hand side should
be understood as in \eqref{eq:character} for some $\pi\in\Sym(l)$, $l\geq 1$. So defined partition-indexed character
has the following properties:
\begin{theorem}[Proposition 4.4, Claim 3.1, Proposition 3.2 in \cite{'Sniady2006a}]
\label{theo:discrete-math}
\
\begin{itemize}
 \item The map $\Pi\mapsto\Sigma_\Pi$ is constant on the orbits of the action of the cyclic group $\Z/ k \Z$ on
the set of pushing partitions of $[k]$.
   \item For any integer $k\geq 2$
\begin{equation} 
  \label{eq:pushing}
  R_k = \sum_{\Pi} I_\Pi\ \Sigma_\Pi ,
\end{equation}
where the sum runs over pushing partitions of $[k]$ and $I_\Pi\in \Z$, called free index, is constant on the orbits of
the action of the cyclic group $\Z/ k \Z$ on the set of pushing partitions of $[k]$.
 \item For the minimal partition $\Pi=\big\{ \{1\}, \dots, \{k\} \big\}$ the corresponding character is given by
     $$ \Sigma_{ \{ \{1\}, \dots, \{k\} \} } = \Sigma_{k-1}. $$
\end{itemize}

\end{theorem}


\subsubsection{Proof of Lemma \ref{lem:div}}

\begin{proof}[Proof of Lemma \ref{lem:div}]

In the following we shall prove that the coefficients are integer numbers.
Their nonnegativity would follow from Theorem \ref{theo:dfs}.

In order to prove that the coefficients of $\frac{\Sigma_{p}-R_{p+1}+2 R_2}{p}$
are integer we consider the action of the cyclic group $\mathbb{Z}/p\mathbb{Z}$
on the set of triples $(\sigma_1,\sigma_2,q)$ which contribute to Theorem
\ref{theo:dfs} defined by conjugation $$\psi(i) (\sigma_1,\sigma_2,q) = \big(
c^i \sigma_1 c^{-i}, c^i \sigma_2 c^{-i}, q'),$$ where $c=(1,2,\dots,k)$ is the
cycle and $q'(a) = q(c^{-i} a c^i)$ for $a \in C(\sigma_2)$.
All orbits
of this action consist of $p$ elements except for the fixpoints of this action
which are of the form $\sigma_1=c^a$, $\sigma_2=c^{1-a}$. These fixpoints
contribute to the monomial $R_{p+1}$ (with multiplicity $1$) and to the monomial
$R_2$ (with multiplicity $p-2$).  This finishes the proof of the integrality of
coefficients of \ref{lem:gal_a}.

We apply Theorem \ref{theo:discrete-math} in the case when $k=p$ is a prime number.
The right-hand side of \eqref{eq:pushing} is constant on each orbit of the action of the cyclic group $\Z / p \Z$. Each
orbit of this action consists of $p$ elements, except for the fixpoints. The
only
pushing partition of $[p]$ which is invariant under the action of the cyclic group $\Z / p \Z$ is the minimal partition 
$\big\{ \{1\}, \dots, \{p\} \big\}$. In this way we proved that 
\begin{multline*}  R_p = \Sigma_{p-1} + p\ \big( \text{linear combination of the characters $\Sigma_\pi$}
\\ 
\text{for $\pi\in\Sym(l)$, $l\geq 1$ with integer coefficients} \big). \end{multline*}
Thanks to Kerov polynomials, each $\Sigma_\pi$ can be written as a polynomial in free cumulants with integer
coefficients. This shows part \ref{lem:gal_b}.

In the following we shall use the notations and results presented in the paper
of Biane  \cite{Biane2003}. In order to prove part \ref{lem:gal_c} we consider
the formal power
series 
$$ H(z) = z-\sum_{j\geq 1} {B}_{j+1}z^{-j}$$ 
where ${B}_{j}$ are Boolean cumulants. Biane showed that
\begin{equation}
\label{eq:hahaha}
(-p-1)\Sigma_{p+1} =
[z^{-1}]H(z)H(z-1)\cdots H(z-p)
\end{equation}
and 
$$(-p-1) {R}_{p+2} =  [z^{-1}]H(z)^{p+1}.$$
We know from \cite{Biane2003} that $B_j$ is a polynomial in free cumulants
$R_2,R_3,\dots$ with integer coefficients as well as $\Sigma_{p+1}$ is a
polynomial in Boolean cumulants $B_2,B_3,\dots$ with integer coefficients; hence
it suffices to show that $(-p-1)(\Sigma_{p+1}-R_{p+2}+R_3)$ is a polynomial in
Boolean cumulants with all coefficients divisible by $p$. It is equivalent to
show that $\Sigma_{p+1}-R_{p+2}+R_3=0$ under additional assumption
that all coefficients of the power series are taken from a field of
characteristic $p$, hence all formulas are considered in a field of
characteristic $p$ from now. 

From \eqref{eq:hahaha} it follows that 
$$ [B_3] \Sigma_{p+1} = \frac{1}{p+1} \sum_{0\leq z \leq p}
\frac{1}{2} \frac{d^2}{dz^2} 
\big[ z (z-1) \cdots (z-p) \big]. $$
From Fermat's little theorem (see for example \cite{GrahamKnuthPatashnik}) it
follows that in the field of characteristic $p$
$$ z (z-1) \cdots (z-p) = z (z^p-z)=z^{p+1}-z^2 $$
hence
\begin{equation}
\label{eq:B3}
 [B_3] \Sigma_{p+1} = \frac{1}{p+1} \sum_{0\leq z \leq p} (-1)=-1. 
\end{equation}

We define ${B}_0 = -1$ and ${B}_1 = 0$; then using binomial formula we have
$$ 
H(z-i)=\sum_{j\geq-1}\sum_{k\geq 0} 
(-1)^{k+1} \binom{-j}{k} i^k {B}_{j+1}  z^{-(j+k)}, $$
hence
\begin{multline}
\label{eq:iloczynn}
-\frac{1}{p+1} H(z)H(z-1)\cdots H(z-p) = 
\\ -\frac{1}{p+1} \sum_{k \in A} \sum_{j \in B}
(-1)^{|k|_1+p+1} 
\\
\left(\prod_{0 \leq i \leq
p}\binom{-j_i}{k_i}i^{k_i}{B}_{j_i+1}\right)
z^{-|j|_0-|k|_1},
\end{multline}
where $A,B \subset \Z^{p+1}$ such that 
$$A = \{(k_0,k_1,\dots,k_p): k_i \geq 0\
\text{for}\ 0 \leq i \leq p\},$$ 
$$B = \{(j_0, j_1,\dots,j_p): j_i \geq -1\
\text{for}\ 0 \leq i \leq p\}$$ 
and for $k = (k_0, k_1, \dots, k_p)$, $i \in \{0,1\}$ the sum
$k_i + k_{i+1} +\cdots + k_p$ is denoted by $|k|_i$.
 For any $a\in
\Z/p\Z$ such that $a\neq 0$ the map $x\mapsto ax$ is a bijection of the multiset
$(0,1,\dots,p)\subset\Z/p\Z$ (notice that $0=p$ appears twice in this multiset)
therefore the left-hand side of \eqref{eq:iloczynn} is equal to 
\begin{multline}
\label{eq:iloczynn2}
-\frac{1}{p+1} H(z)H(z-a)\cdots H(z-pa) = 
\\ -\frac{1}{p+1} \sum_{k \in A} \sum_{j \in B}
(-1)^{|k|_1+p+1} a^{|k|_1} 
\\
\left(\prod_{0 \leq i \leq
p}\binom{-j_i}{k_i}i^{k_i}{B}_{j_i+1}\right)
z^{-|j|_0-|k|_1}.
\end{multline}

The coefficient of $z^{-1}$ in \eqref{eq:iloczynn2} can be viewed as a
polynomial in $a$; we shall denote it by $P(a)$. In the following we will study
its coefficients of highest degrees. We are interested only in the
summands for which $|j|_0+|k|_1=1$; since $|j|_0\geq
-p-1$ therefore $|k|_1\leq p+2$ and the degree of $P(a)$ is at
most $p+2$. 

However, $|k|_1=p+2$ would correspond to the case
$j=(-1,\-1,\dots,-1)$ which is equivalent to setting $B_2=B_3=\cdots=0$;
therefore
$[a^{p+2}] P(a)=0$. 

For $|k|_1=p+1$ there is no summand for which
$j_0,\dots,j_p\neq 0$ hence $[a^{p+1}] P(a)=0$. 

For $|k|_1=p$ every
summand which contributes is of the following form: one of the numbers
$j_0,\dots,j_p$ is equal to $1$ and all the others are equal to $-1$. This 
shows that $[a^p] P(a)$ viewed as a polynomial in $B_2,B_3,\dots$ contains only
one monomial, namely a multiple of $B_2$; also $[B_2] P(a)$ viewed as a
polynomial in $a$ contains only one monomial namely a multiple of $a^p$. 
Therefore $[a^{p}]P(a)$ is a multiple of $B_2$ and the value of the
coefficient of $B_2$ fulfills:
$$[B_2][a^{p}]P(a)=[B_2] P(1)=[B_2] \Sigma_{p+1}. $$ 
Since $p$ is odd, the expansion of $\Sigma_{p+1}$ into Boolean cumulants
contains only summands which are of odd degree \cite{Biane2003}; it follows
that $[a^{p}]P(a)=0$.

In an analogous way we prove that $[a^{p-1}]P(a)$ is a multiple of $B_3$ and 
$$[B_3][a^{p-1}]P(a)=[B_3] \Sigma_{p+1}= -1 $$  
from \eqref{eq:B3}.

In this way we proved that $P(a)$ is a polynomial of degree $p-1$ which
takes the same value for all $a\in\{1,\dots,p-1\}$. Polynomial
$$\widetilde{P}(a)=  -B_3 a^{p-1} + P(0) $$
has the same properties. It follows that $P-\widetilde{P}$ has degree at most
$p-2$ which takes the same value for all $a\in\{1,\dots,p-1\}$ hence it must be
equal to the constant. It follows that $\widetilde{P}=P$.

Therefore 
$$ \Sigma_{p+1} = P(1) = -B_3 +  R_{p+2}. $$
Observation that $B_3=R_3$ finishes the proof for the third expression.
\end{proof}

It is
interesting that for the first two expressions we managed to find combinatorial
proofs while for the last expression there seems to be no natural candidate
for a combinatorial approach. 

\section*{Acknowledgments}


Research is supported by the Polish Ministry of Higher Education research
grant N
N201 364436 for the years 2009--2012.

PŚ thanks Marek Bożejko, Philippe Biane,
Akihito Hora, Jonathan Novak,
Światosław Gal and Jan Dymara for several stimulating discussions during various
stages of this research project.

\bibliographystyle{alpha}

\bibliography{biblio2009}

\end{document}